\newtheorem{theorem}{Theorem}[section]
\newtheorem{prop}[theorem]{Proposition}
\newtheorem{lemma}[theorem]{Lemma}
\newtheorem{coro}[theorem]{Corollary}
\newtheorem{thm-def}[theorem]{Theorem-Definition}
\newtheorem{def-prop}[theorem]{Definition-Proposition}
\newtheorem{prop-def}[theorem]{Proposition-Definition}
\newtheorem{coro-def}[theorem]{Corollary-Definition}
\theoremstyle{definition}
\newtheorem{defn}[theorem]{Definition}
\newtheorem{remark}[theorem]{Remark}
\newtheorem{example}[theorem]{Example}
\newcommand{\nc}{\newcommand}
\nc{\tred}[1]{\textcolor{red}{#1}}
\nc{\tblue}[1]{\textcolor{blue}{#1}}
\nc{\tgreen}[1]{\textcolor{green}{#1}}
\nc{\tpurple}[1]{\textcolor{purple}{#1}}
\nc{\btred}[1]{\textcolor{red}{\bf #1}}
\nc{\btblue}[1]{\textcolor{blue}{\bf #1}}
\nc{\btgreen}[1]{\textcolor{green}{\bf #1}}
\nc{\btpurple}[1]{\textcolor{purple}{\bf #1}}
\newcommand{\QSym}{\mathrm{QSym}}
\newcommand{\comp}{\vDash}
\renewcommand{\frak}{\mathfrak}
\newcommand{\delete}[1]{}
	\nc{\mlabel}[1]{\label{#1} {{\tt {\tiny{(#1)}}}}\ }
	\nc{\mcite}[1]{\cite{#1} {{\tiny\tt (#1)}}\ }
	\nc{\mref}[1]{\ref{#1}{{\tiny\tt (#1)}}\ }
	\nc{\meqref}[1]{~\eqref{#1}{{\tiny\tt (#1)}}\ }
	\nc{\mbibitem}[1]{\bibitem[\bf #1]{#1}}
\nc{\mlabel}[1]{\label{#1}}  % Use this to suppress names
\nc{\mcite}[1]{\cite{#1}}  % Use this to suppress names
\nc{\mref}[1]{\ref{#1}}  % Use this to suppress names
\nc{\meqref}[1]{~\eqref{#1}}
\nc{\mbibitem}[1]{\bibitem{#1}} % Use this to show number name
\nc{\sbar}{, }
\nc{\wvec}[2]{{\scriptsize{ [
    \begin{array}{c} #1 \\ #2 \end{array}   ]}}}
\nc{\lp}{\big ( }
\nc{\llp}{\Big (}
\nc{\Llp}{\left (}
\nc{\rp}{\big ) }
\nc{\rrp}{\Big )}
\nc{\Rrp}{\right )}
\nc{\lb}{\big < }
\nc{\llb}{\!\Big \langle }
\nc{\Llb}{\! \left <}
\nc{\rb}{\big >  }
\nc{\rrb}{\Big \rangle \!}
\nc{\Rb}{\Big \rangle\! }
\nc{\length}{{\rm leng}}
\nc{\bin}[2]{ (_{\stackrel{\scs{#1}}{\scs{#2}}})}  %binomial coeff
\nc{\binc}[2]{ \big (\! \begin{array}{c} \scs{#1}\\
    \scs{#2} \end{array}\! \big )}  %binomial coeff
\nc{\bincc}[2]{  \left ( {\scs{#1} \atop
    \vspace{-1cm}\scs{#2}} \right )}  %binomial coeff
\nc{\bs}{\bar{S}}
\nc{\cosum}{\sqsubset}
\nc{\la}{\longrightarrow}
\nc{\rar}{\rightarrow}
\nc{\dar}{\downarrow}
\nc{\dap}[1]{\downarrow \rlap{$\scriptstyle{#1}$}}
\nc{\uap}[1]{\uparrow \rlap{$\scriptstyle{#1}$}}
\nc{\defeq}{\stackrel{\rm def}{=}}
\nc{\disp}[1]{\displaystyle{#1}}
\nc{\dotcup}{\ \displaystyle{\bigcup^\bullet}\ }
\nc{\gzeta}{\bar{\zeta}}
\nc{\hcm}{\ \hat{,}\ }
\nc{\hts}{\hat{\otimes}}
\nc{\barot}{{\otimes}}
\nc{\free}[1]{\bar{#1}}
\nc{\uni}[1]{\tilde{#1}}          %\tilde is disabled
\nc{\hcirc}{\hat{\circ}}
\nc{\lleft}{[}
\nc{\lright}{]}
\nc{\curlyl}{\left \{ \begin{array}{c} {} \\ {} \end{array}
    \right .  \!\!\!\!\!\!\!}
\nc{\curlyr}{ \!\!\!\!\!\!\!
    \left . \begin{array}{c} {} \\ {} \end{array}
    \right \} }
\nc{\longmid}{\left | \begin{array}{c} {} \\ {} \end{array}
    \right . \!\!\!\!\!\!\!}
\nc{\ora}[1]{\stackrel{#1}{\rar}}
\nc{\ola}[1]{\stackrel{#1}{\la}}%${\Bbb Z}$
\nc{\ot}{\otimes}
\nc{\mot}{{{\sbar}}}
\nc{\otm}{\mot}
\nc{\scs}[1]{\scriptstyle{#1}}
\nc{\subv}{{^{\star}}}
\nc{\cov}{{^{\sharp}}}
\nc{\mrm}[1]{{\rm #1}}
\nc{\dirlim}{\displaystyle{\lim_{\longrightarrow}}\,}
\nc{\invlim}{\displaystyle{\lim_{\longleftarrow}}\,}
\nc{\proofbegin}{\noindent{\bf Proof: }}
\nc{\proofend}{$\quad \square$ \vspace{0.3cm}}
\nc{\sha}{{\mbox{\cyr X}}}  %used to be \cyr
\nc{\shap}{{\mbox{\cyrs X}}} %sha as product
\nc{\shpr}{\diamond}    %Shuffle product
\nc{\shplus}{\shpr^+}
\nc{\shprc}{\shpr_c}    %Cartier's product
\nc{\msh}{\ast}
\nc{\vep}{\varepsilon}
\nc{\labs}{\mid\!}
\nc{\rabs}{\!\mid}
\nc{\dep}{\mathrm{dep}}
\newcommand{\Q}{\mathbb{Q}}
\newcommand{\R}{\mathbb{R}}
\newcommand{\Z}{\mathbb{Z}}
\newcommand {\cala}{{\mathcal {A}}}
\newcommand {\calh}{{\mathcal {H}}}
\newcommand {\calp}{{\mathcal {P}}}
\newcommand {\calw}{{\mathcal {W}}}
\nc{\fraka}{{\frak a}}
\nc{\frakA}{{\frak A}}
\nc{\frakb}{{\frak b}}
\nc{\frakB}{{\frak B}}
\nc{\frakf}{{\frak F}}
\nc{\frakh}{{\frak h}}
\nc{\frakH}{{\frak H}}
\nc{\frakk}{{\frak k}}
\nc{\frakK}{{\frak K}}
\nc{\frakM}{{\frak M}}
\nc{\frakm}{{\frak m}}
\nc{\frakP}{{\frak P}}
\nc{\frakp}{{\frak p}}
\nc{\frakS}{{\frak S}}
\nc{\bfrakM}{\overline{\frakM}}
\nc {\e} {{\epsilon}}
\nc{\fpower}{\calp_{\rm fin}}
\nc{\pfpair}[2]{\Big(\begin{array}{c}\scs{#1} \\ \scs{#2} \end{array} \Big)}
\font\cyr=wncyr10
\font\cyrs=wncyr7
\newcommand {\calhd}{{{\mathcal {H}} _{\Z _{\ge 1}}}}
\newcommand {\Deltaa} {{\Delta_{\ge 1}}}
\nc{\id}{\mathrm{id}}
\nc{\mchi}{\chi_{{}_0}}
\nc{\Isom}{\mathrm{Isom}}
\nc{\shapb}{{\overline{\shap}}}
\nc{\mls}{{<_h}}	%h for homogeneous
\nc{\mle}{{\le_h}}
\nc{\mleq}{\leq_h}
\nc{\nmle}{{\le_m}}	% m for mixed
\nc{\nmleq}{{\leq_m}}
\nc {\cks}{\text{\textcircled {s}}\xspace}
\nc{\zb}[1]{\textcolor{blue}{Bin: #1}}
\nc{\xhy}[1]{\textcolor{red}{Yu: #1}}
\nc{\li}[1]{\textcolor{purple}{Li:#1}}
\begin{document}

\title[Isomorphism between Hopf algebras for MZVs]{Isomorphism between Hopf algebras for multiple zeta values }
%
%========================================================================================%
\author{Li Guo}
\address{Department of Mathematics and Computer Science,
Rutgers University, Newark, NJ 07102, USA}
\email{liguo@rutgers.edu}

\author{Hongyu Xiang}
\address{School of Mathematics,
Sichuan University, Chengdu, 610064, P. R. China}
\email{xianghongyu1@stu.scu.edu.cn}

\author{Bin Zhang}
\address{School of Mathematics,
Sichuan University, Chengdu, 610064, P. R. China}
\email{zhangbin@scu.edu.cn}

\date{\today}

%======================================================================================
\begin{abstract} The classical quasi-shuffle algebra for multiple zeta values have a well-known Hopf algebra structure. Recently, the shuffle algebra for multiple zeta values are also equipped with a Hopf algebra structure. This paper shows that these two Hopf algebras are isomorphic utilizing quasi-symmetric functions.
This Hopf algebra isomorphism is compared with with the well-known isomorphism between the shuffle Hopf algebra and quasi-shuffle Hopf algebra of Hoffman, Newman and Radford.
\end{abstract}

\subjclass[2020]{
	11M32,	%Multiple Dirichlet series and zeta functions and multizeta values
	16T05,	%Hopf algebras and their applications
	12H05, %Differential algebra
	16T30,  % Connections of Hopf algebras with combinatorics
%	16W25,	%Derivations, actions of Lie algebras
	%17B38,	%Yang-Baxter equations and Rota-Baxter operators
	16S10,	%Associative rings determined by universal properties (free algebras, coproducts, adjunction of inverses, etc.)
	40B05	% Multiple sequences and series
}
\keywords{multiple zeta function, shuffle product, quasi-shuffle product, Hopf algebra, quasi-symmetric function, combinatorial Hopf algebra}

\maketitle

\vspace{-1cm}

\tableofcontents

\setcounter{section}{0}

%=====================================================================================

\section{Introduction}
This paper establishes isomorphisms between the shuffle Hopf algebra and quasi-shuffle Hopf algebra encoding multiple zeta values, and compares this isomorphism with the Hopf algebra isomorphism obtained by Hoffman, Newman and Radford\cite{H2,NR}.

\subsection{The quasi-shuffle algebra and shuffle algebra for MZVs}
Multiple zeta values (MZVs) are the values of the multiple zeta series
\begin {equation}
\mlabel {eq:zeta}
\zeta (s_1, \cdots, s_k)=\sum _{n_1>\cdots > n_k>0}\frac 1{n_1^{s_1}\cdots n_k^{s_k}}
\end{equation}
at convergent points with positive integer arguments, that is, $s_1>1$, $s_i\in \Z _{\ge 1}, i=2, \cdots, k$. MZVs  and  their  generalizations  have been studied  extensively  from  different  points  of  view since 1990s with  connections  to  number theory, algebraic  geometry,  mathematical  physics,  quantum  groups  and  knot  theory \mcite{ BBV,BBBL,  BK, Car,Fur, Gon, GM,GZ1, H3, Ka, Kre, Ter, Zag, Zh}.

A fascinating aspect of MZVs is that the analytically defined values have deep algebraic structures, which have led to rich algebraic relations among MZVs and critical motivic implications, among other applications.
These algebraic structures stem from the two algebraic interpretations of the multiplications of two MZVs, namely the stuffle (or quasi-shuffle) product and shuffle product.

The stuffle relation is based on the series expression (\mref {eq:zeta}) of MZVs; while
the shuffle relation is based on the iterated integral expression of MZVs:
  {\small\begin {equation}
  \mlabel {eq:zetai}
  \zeta (s_1, \cdots s_k)=\underbrace {\int_ 0^1\frac {dt}t\int _0^t \frac {dt}t\cdots \int _0^t\frac {dt}t}_{s_1-1}\int _0^t\frac {dt}{1-t}\cdots \underbrace {\int _0^t\frac {dt}t\cdots \int _0^t\frac {dt}t}_{s_k-1}\int _0^t\frac {dt}{1-t}
  \end{equation}}
With the encoding of $\zeta(\vec s)$ by the basis element $[\vec s]$ of the graded vector space
\vspace{-.2cm}
\begin {equation}
\mlabel {eq:shafalg}
\calhd:= \Q{\bf1} \bigoplus_{k\in \Z_{>0}, \vec s\in \Z_{\ge 1}^k}\Q[\vec s],
\end{equation}
these relations are lifted as  stuffle  multiplication  $*$ and shuffle multiplication $\shapb$ on $\calhd$,
resulting the {\bf MZV quasi-shuffle algebra} $(\calhd,\ast)$ and  the {\bf MZV shuffle algebra} $(\calhd,\shapb)$.
For distinction, note that here the shuffle product $\shapb$ on $\calhd$ is not the product by shuffling the components of $[\vec s]$, but rather obtained by transporting the usual shuffle product of words on the vector space $\Q\langle x_0,x_1\rangle x_1$ underlying the noncommutative polynomial algebra, through the linear bijection
\begin {equation}
\mlabel {eq:rho0}
\rho: \calhd \to \Q \oplus\Q \langle x_0, x_1 \rangle x_1, \quad {\bf 1}\mapsto {\bf1}, [s_1, \cdots, s_k]\mapsto x_0^{s_1-1}x_1\cdots x_0^{s_k-1}x_1,
\end{equation}
with $x_0^{s_1-1}x_1\cdots x_0^{s_k-1}x_1$ encoding the iterated integral in Eq.\,\meqref{eq:zetai}.

The subspace
\begin {equation}
 \mlabel {eq:cshafalg}
 \calh ^0=\Q {\bf1} \oplus \bigoplus_{k\in \Z_{>0}, \vec s\in \Z_{\ge 1}^k, s_1>1}\Q[\vec s].
\end{equation}
is a subalgebra with  respect  to  both multiplications, and the stuffle and shuffle relations of MZVs can be viewed as a linear map
\begin {equation}
\mlabel{eq:zetahom}
\zeta: \calh ^0\to \R
\end {equation}
that preserves both multiplications $\ast$ and $\shapb$. Therefore,
$$\Big\{\zeta ([\vec s] *[\vec t])-\zeta ([\vec s] \,\shapb [\vec t])=0\ \Big| \ \vec s\in \Z_{\ge 1}^k, s_1>1, \vec t\in \Z_{\ge 1}^\ell , t_1>1, k, \ell \in \Z_{>0} \Big\}
$$
is a family of $\Q$-linear relations among MZVs, which is called the {\bf double shuffle relation}.
Ihara-Kaneko-Zagier \mcite {IKZ} extended  the algebra homomorphisms (\mref {eq:zetahom}) to algebra homomorphisms
$$\zeta^\ast, \zeta^\shap: \calhd \to \R [T],
$$
and extended the double shuffle relation to {\bf regularized double shuffle relation}:
$$\Big\{\zeta\big([1]* [\vec s]-[1]\shapb [\vec s]\big), \zeta\big([\vec s] *[\vec t]\big)-\zeta\big([\vec s] \shapb [\vec t]\big)\ \Big| \ \vec s\in \Z_{\ge 1}^k, s_1>1, \vec t\in \Z_{\ge 1}^\ell , t_1>1, k, \ell \in \Z_{>0} \Big\}.
$$
Conjecturally, they generate all algebraic relations among MZVs\,\mcite{IKZ}.

\subsection{Hopf algebra structures on the MZV quasi-shuffle algebra and the MZV shuffle algebras}
Studies of MZVs motivated the exploration of further structures on the MZV quasi-shuffle algebra $(\calhd,\ast)$ and shuffle algebra $(\calhd,\shapb)$, including their possible Hopf algebra structures.

With the deconcatenation coproduct, $(\calhd,\ast)$ becomes a Hopf algebra, as a special case of the general quasi-shuffle Hopf algebra\,\mcite{H2,NR}.
The action of this Hopf algebra on the MZV shuffle algebra has been used to obtain classes of algebraic relations of MZVs~\mcite{HO,Oh}.
Hopf algebras have played a critical role in the study of motivic MZVs~\mcite{Brown,GF,Rac}.
Such Hopf algebra structures also allow the Connes-Kreimer approach to renormalization in quantum field theory\,\mcite{CK} to be applied to studying the multiple zeta series at divergent arguments~\mcite{GZ1,MP2}. In a recent study~\mcite{GHXZ1}, a Hopf algebra structure $(\calhd,\shapb,\Deltaa)$ was obtained on the MZV shuffle algebra $(\calhd,\shapb)$ (see Theorem\,\mref{pp:unique}).

As is well-known, the graded vector space
$$ \calhd= \Q{\bf1} \bigoplus_{k\in \Z_{>0}, \vec s\in \Z_{\ge 1}^k}\Q[\vec s],
$$
has a natural shuffle Hopf algebra structure where the product $\shap$ is shuffling the vector components and the coproduct $\Delta_{\rm dec}$ is the deconcatenation as in the case of the quasi-shuffle Hopf algebra $(\calhd,\ast,\Delta_{\rm dec})$. As a special case of the Hoffman-Newman-Radford isomorphism\,\mcite{H2,NR}, there is
\begin{equation}
\mlabel{e:hoffiso1}
\exp: (\calhd,\shap,\Delta_{\rm dec}) \to (\calhd,\ast,\Delta_{\rm dec}).
\end{equation}

We note that the new Hopf algebra $(\calhd,\shapb,\Deltaa)$ has different product and coproduct from those of $(\calhd,\shap,\Delta_{\rm dec})$.
Furthermore, despite the algebra isomorphism
$ \rho: (\calhd,\shapb) \cong (\Q\oplus \Q \langle x_0, x_1 \rangle x_1,\shap)$ in Eq.\,\meqref{eq:rho0},
the coproduct $\Deltaa$ is different from the transported coproduct from the deconcatenation of the words $x_0^{s_1-1}x_1\cdots x_0^{s_k-1}x_1\in \Q \langle x_0, x_1 \rangle x_1$. In fact, the subspace $\Q \langle x_0, x_1 \rangle x_1$ is not closed under the deconcatenation operation on words. See Remark\,\mref{r:coprodnote} and Example\,\mref{ex:psiexam} for details and see\,\cite{GXZ} for a further study.

Since the Hopf algebra $(\calhd,\shapb,\Deltaa)$ is built on the MZV shuffle algebra $(\calhd,\shapb)$, it would be interesting to study its further properties, especially its relation with the MZV quasi-shuffle Hopf algebra $(\calhd,\ast,\Delta_{\rm dec})$. This paper pursues this direction and establishes the following results.
\begin{enumerate}
\item It provides a complete parametrization of Hopf algebra isomorphisms from $(\calhd, \shapb, \Deltaa)$ to $(\calhd, \ast, \Delta _{\rm dec})$ (Theorem\,\mref{t:psiisom});
\item It gives an explicit construction of such a Hopf algebra isomorphism (Theorem\,\mref{t:shachi});
\item It compares this Hopf algebra isomorphism with the Hoffman-Newman-Radford isomorphism (Theorem\,\mref{t:hoffiso}) \end{enumerate}

It is known that the algebras $(\calhd,\ast)$ and $(\calhd,\shapb)$, as well as $(\calhd,\shap)$, are polynomial algebras on countably many generators given by their respective Lyndon words. Consequently, they are isomorphic as polynomial Hopf algebras.
The significance of our results lies in providing a natural and explicit construction of such an isomorphism, in close analogy with the Hoffman-Newman-Radford Hopf algebra isomorphism. In view of the fundamental roles played by the MZV shuffle algebra $(\calhd,\shapb)$ and the MZV quasi-shuffle algebra $(\calhd,\ast)$ in the study of MZVs, the Hopf algebra isomorphism obtained here is hoped to provide a new perspective.

\subsection{Outline and notations}
The paper is structured as follows. In Section \mref{s:hopfhom}, we recall the construction of the Hopf algebra structure on the shuffle algebra $(\calhd,\shapb)$ and compare it with other Hopf algebras on $\calhd$. We then apply the universal property of the Hopf algebra of quasi-symmetric functions~\mcite{ABS06} to obtain Hopf algebra homomorphisms from  $(\calhd, \shapb, \Deltaa)$ to $(\calhd, \ast, \Delta _{\rm dec})$ that are induced by characters on $(\calhd, \shapb)$ (Corollary~\mref{coro:IndHom}).

Section \mref{s:hopfiso} determines when such a Hopf algebra homomorphism is an isomorphism. A well-order is defined on the homogeneous components of $\calhd$ and is extended to a transitive relation on tensor powers of $\calhd$. This relation allows us to obtain a bijection between the set of Hopf algebra isomorphisms from   $(\calhd, \shapb, \Deltaa)$ to $(\calhd, \ast, \Delta _{\rm dec})$ and the set of characters on  $(\calhd, \shapb)$ that is nonzero on every depth one vector (Theorem\,\mref{t:psiisom}). Finally one such character is constructed, showing that the MZV shuffle Hopf algebra $(\calhd,\shapb,\Deltaa)$ and the MZV quasi-shuffle algebra $(\calhd,\ast,\Delta_{\rm dec})$ are isomorphic. Combining with the Hoffman-Newman-Radford isomorphism, we conclude that all the three Hopf algebras $(\calhd,\ast,\Delta_{\rm dec}), (\calhd,\shap,\Delta_{\rm dec})$ and $(\calhd,\shapb,\Deltaa)$ are naturally isomorphic. Examples are provided for the computations of these isomorphisms.

\noindent
{\bf Notations.}
For a set $X$, let
\begin{equation}
H_X^+:=\bigcup _{k\in \Z _{>0}}X^k,\quad  H_X:=\{{\bf1}\}\cup \bigcup _{k\in \Z _{>0}}X^k,
\mlabel{eq:hset}
\end{equation}
and $\calh ^+_X$, $\calh _X$ be the vector spaces over $\Q $ with a basis $H_X^+$ and $H_X$,
that is
\begin{equation}
\calh ^+_X :=\Q H_X^+=\bigoplus _{k\in \Z _{>0}, \vec s\in X ^k }\Q [\vec s], \quad \calh _X :=\Q H_X=\Q {\bf1}\oplus \bigoplus _{k\in \Z _{>0}, \vec s\in X ^k }\Q [\vec s].
\mlabel{eq:hspace}
\end{equation}
Here we use $[\vec s]$ to denote the basis element without any algebraic structure inherited from $X$.
We are mostly interested in the case of $\Z _{\ge 1}$. For $\vec s=(s_1,\cdots, s_k)$ $\in \Z_{\ge 1}^k$, $k$  is  called  the {\bf depth}  of $[\vec s]$  and  $|\vec s|=s_1+\cdots+s_k$  is called the {\bf weight} of $[\vec s]$.

\section {The shuffle Hopf algebra and quasi-shuffle Hopf algebra for MZVs}
\mlabel{s:hopfhom}
This section recalls three Hopf algebraic structures with the common underlying vector space $\calhd$. Two of them follows from the general constructions of shuffle Hopf algebras and quasi-shuffle Hopf algebras on a set or a commutative semigroup by the deconcatenation coproduct.
The quasi-shuffle algebra on $\calhd$ encodes the stuffle product of MZVs; while the shuffle algebra is not the MZV shuffle algebra $(\calhd,\shapb)$. These two Hopf algebras are isomorphic by a classical result~\mcite{H2,NR}. The third Hopf algebra on $\calhd$ is obtained by transporting of structures from a Hopf algebra on another vector space where the product encodes the shuffle product of MZVs, and the coproduct $\Deltaa$ was recently discovered\,\mcite{GHXZ1}. The purpose of this section is establish the connection between the MZV shuffle Hopf algebra $(\calhd,\shapb,\Deltaa)$ and the MZV quasi-shuffle algebra $(\calhd,\ast,\Delta_{\rm dec})$.

\subsection{Shuffle and quasi-shuffle Hopf algebras by deconcatenation}

We first recall the usual construction of shuffle and quasi-shuffle Hopf algebras where the coproduct is by deconcatenation.

For an alphabet set $\cala$, let $\calw _\cala $ be the set of words (including the empty word $\bf 1$), namely the free monoid on $\cala$. Then in the linear space $\Q \calw _\cala $ with a basis $\calw _\cala$, the {\bf shuffle product} $\shap$ is defined by the recursion
\begin{equation}
\mlabel{e:shufprod}
w\shap {\bf 1}={\bf 1}\shap w=w, \quad
a w_1\shap bw_2=a(w_1\shap b w_2)+b(a w_1\shap w_2),
\end{equation}
for $w, w_1, w_2 \in \calw_\cala $, $a, b\in \cala$.
Furthermore, define the {\bf deconcatenation coproduct} on $\Q\calw_\cala$ by
\begin{equation}
\Delta_{\rm dec}({\bf 1})={\bf 1}\otimes {\bf 1},\quad
\Delta_{\rm dec}(w)=\sum_{w_1w_2=w} w_1\otimes w_2,
\mlabel{e:deccop}
\end{equation}
and the linear map
\begin{align}
	\epsilon: \Q\calw_\cala \to \Q, \quad  \epsilon(w)=\left\{\begin{array}{ccc}
		&1,& w={\bf 1},\\
		&0,& w\in \calw_\cala, \not ={\bf 1}.
	\end{array}\right.
\mlabel{e:deccounit}
\end{align}
The resulting Hopf algebra $(\Q\calw_\cala, \shap ,{\bf1}, \Delta_{\rm dec}, \epsilon)$ is called the {\bf shuffle Hopf algebra with alphabet $\cala$}.

\begin{example} For the alphabet $\cala=\Z_{\ge 1}$, we use a vector in place of a word. Then
$$\Q\calw _\cala =\calhd
$$
in Eq.\,\meqref{eq:shafalg}, and
we have the shuffle Hopf algebra $(\calhd, \shap, {\bf 1},\Delta_{\rm dec}, \epsilon)$.
\end{example}

If the alphabet $\cala$ is a commutative semigroup with a product $\cdot$, then $\Q\calw _\cala $ has a {\bf quasi-shuffle product} $\ast$ defined by the recursion
\begin{equation}
\mlabel{e:qshufprod}
\begin{split}
w\ast {\bf 1}&={\bf 1}\ast w=w,\\
a w_1\ast bw_2&=a(w_1\ast b w_2)+b(a w_1\ast w_2) +(a\cdot b)(w_1\ast w_2).
\end{split}
\end{equation}
With the same deconcatenation coproduct \meqref{e:deccop} and the counit in \meqref{e:deccounit}, we obtain a Hopf algebra
$(\Q\calw _\cala, \ast, {\bf1}, \Delta_{\rm dec}, \epsilon )$, called the {\bf quasi-shuffle Hopf algebra (on the semigroup alphabet $\cala$)}.

\begin{example} Taking the additive semigroup $\cala=\Z_{\ge 1}$, we have the MZV quasi-shuffle Hopf algebra $(\calhd, \ast, \Delta_{\rm dec})$.
\mlabel{ex:quasishuf}
\end{example}

As a special case of the {\bf Hoffman-Newman-Radford isomorphism}\,\cite{AM,H2,NR}, there is a Hopf algebra isomorphism
\begin{equation}
\begin{split}
\exp:&  (\calhd,\shap,\Delta_{\rm dec})\to (\calhd,\ast,\Delta_{\rm dec}), \\
{\bf1}&\mapsto {\bf1}, \\
[s_1,\ldots,s_k]& \mapsto
\sum_{(i_1,\ldots,i_\ell)\models k} \frac{1}{i_1!\cdots i_\ell!} [s_1+\ldots+s_{i_1},s_{i_1+1}+\ldots+s_{i_2},\ldots ,s_{i_1+\cdots+i_{\ell-1}+1}+\ldots+s_k].
\end{split}
\mlabel{e:hoffexp}	
\end{equation}
The inverse isomorphism is
\begin{equation}
\begin{split}
\log:& (\calhd,\ast,\Delta_{\rm dec}) \to (\calhd,\shap,\Delta_{\rm dec}), \\
{\bf1}&\mapsto {\bf1}, \\
[s_1,\ldots,s_k]& \mapsto
\sum_{(i_1,\ldots,i_\ell)\models k} \frac{(-1)^{k-\ell}}{i_1!\cdots i_\ell!} [s_1+\ldots+s_{i_1},s_{i_1+1}+\ldots+s_{i_2},\ldots, s_{i_1+\cdots+i_{\ell-1}+1}+\ldots+s_k].
\end{split}
\mlabel{e:hofflog}	
\end{equation}

\subsection{The shuffle Hopf algebra for MZVs}
\mlabel{ss:shufhopf}
The third Hopf algebraic structure on $\calhd$ arises from the integral interpretations of MZVs in Eq.\,(\mref {eq:zetai})\,\mcite{GHXZ1}.

For the alphabet $\cala=\{x_0,x_1\}$, the shuffle algebra $\Q \calw_\cala$ naturally shares its underlying space with the noncommutative polynomial algebra $\Q\langle x_0,x_1\rangle$, of which the subspace $x_0 \Q\langle x_0,x_1\rangle x_1$ spanned by words started with $x_0$ and ended with $x_1$ is closed under the word shuffle product $\shap$ in Eq.\,\meqref{e:shufprod}.

Through the linear bijection
\begin {equation}
\mlabel {eq:rho}
\rho: \calhd \to \Q\oplus \Q \langle x_0, x_1 \rangle x_1, \ {\bf1}\mapsto {\bf1}, \ [s_1, \cdots, s_k]\mapsto x_0^{s_1-1}x_1\cdots x_0^{s_k-1}x_1,
\end{equation}
%\zb {already marked as equation (4)}
the word shuffle product $\shap$ on the right is pull back to  a multiplication $\shapb$ on $\calhd$, and the algebra $(\calhd,\shapb)$ is called the {\bf MZV shuffle algebra}. Then
$$\rho ^{-1}(x_0\Q \langle x_0, x_1 \rangle x_1)=\bigoplus _{k\in \Z _{>0}, \vec s\in \Z _{\ge 1} ^k, s_1\ge 2 }\Q [\vec s]
$$
is a (nonunitary) subalgebra corresponding to the MZVs.

For further details of the product $\shapb$ including its intrinsic description without referring to $\rho$ and its relation with the Rota-Baxter operator, we refer the reader to \mcite{BBBL,GX,GZ3}. The case of $[s]\shapb [t]$ for scalars $s, t\in \Z_{\geq 1}$ corresponds to Euler's decomposition formula for $\zeta(s)\zeta(t)$.

We next recall the coproduct $\Deltaa$ on $\calhd$ which is defined by a family of linear operators \mcite {GHXZ1}:
{\small
\begin{equation}
\begin{split}
	\delta_i: \calhd& \longrightarrow \calhd,\quad i\ge 1,\\
{\bf1}&\mapsto 0, \\
	[s_1,\cdots,s_k]& \mapsto \left\{\begin{array}{ll}
			\sum_{j=1}^i s_j[s_1,\cdots, s_j+1,\cdots, s_i,\cdots,s_k], & i\le k,  \\
			0,& i>k.
		\end{array}\right.
\end{split}
\mlabel{eq:deltadef}
\end{equation}
}

Let
$$p_i:=\delta _i-\delta _{i-1}.
$$
Then by\,\cite[Lemma\,2.2]{GHXZ1}, we have

\begin{align}
	p_{i} [s_1,\ldots,s_k]&=s_i [s_1,\ldots,s_{i-1},s_i+1,s_{i+1},\ldots,s_k], \quad 1\leq i\leq k,
	\mlabel{eq:ptos1}\\
	p_{k+1} [s_1,\ldots,s_k]&=-\sum_{j=1}^k s_j[s_1,\cdots, s_j+1,\cdots, s_k],\mlabel{eq:ptos2} \\
	p_{i} [s_1,\ldots,s_k]&=0, \quad i\leq 0 \text{ or } i\geq k+2.
	\mlabel{eq:ptos3}
\end{align}
Furthermore,
\begin{equation}
\mlabel{eq:Fraction}
[s_1, \cdots, s_i, \cdots, s_k]=\frac {p_1^{s_1-1}\cdots p_k^{s_k-1}}{(s_1-1)!\cdots (s_k-1)!}[1_k].
\end{equation}
Here we use the abbreviation $[1_k]=[\underbrace{1,\ldots,1}_{k}]$.

For a graded linear operator $A: \calhd \to \calhd$,  the {\bf shifted tensor product} of $A$ with the operator sequence $\{\delta _i\}_{i \geq 1}$ in Eq.\,\meqref{eq:deltadef} is defined by
\begin{equation}
	\begin{split}
		A\cks \delta_i: \calhd\otimes \calhd&\longrightarrow \calhd\otimes \calhd, \\
		[\vec s]\otimes [\vec t]&\mapsto
		A([\vec s])\otimes \delta_{i-\dep(\vec s)}([\vec t]).
	\end{split}
	\mlabel{eq:shiftten}
\end{equation}

\begin{prop}\mcite {GHXZ1}
There is a unique linear operator $\Deltaa : \calhd\longrightarrow \calhd\otimes \calhd$  such that
\begin{enumerate}
\item  $\Deltaa({\bf 1})={\bf 1}\otimes {\bf 1}$;
\item $\Deltaa([1_k])=\sum\limits_{j=0}^{k}[1_j]\otimes [1_{k-j}]$;
\item  for any $i\in \Z _{\ge 1}$,
%$\delta_i$ commutes with $\Deltaa$ in the following sense
$$(\id\ \cks \delta_i+\delta_i\otimes \id)\Deltaa=\Deltaa \delta_i.
$$
\end{enumerate}
Furthermore, $\Deltaa$ is coassociative and is a homomorphism with respect to $\shapb$.
\mlabel{pp:unique}
\end{prop}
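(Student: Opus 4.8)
The plan is to treat the three assertions in turn, using throughout that $\calhd$ is generated out of the elements $[1_k]$ by the operators $p_i=\delta_i-\delta_{i+1}$ through the explicit formula (\ref{eq:Fraction}). For uniqueness, observe that conditions (1) and (2) fix $\Deltaa$ on $\mathbf{1}$ and on every $[1_k]$. Writing $D_i:=\delta_i\otimes\id+\id\cks\delta_i$, condition (3) is $\Deltaa\,\delta_i=D_i\,\Deltaa$; subtracting the relation for $\delta_{i+1}$ from that for $\delta_i$ gives $\Deltaa\,p_i=P_i\,\Deltaa$ with $P_i:=D_i-D_{i+1}$. Since (\ref{eq:Fraction}) expresses any $[\vec s]$ of depth $k$ as one fixed polynomial in the $p_i$ applied to $[1_k]$, iterating $\Deltaa\,p_i=P_i\,\Deltaa$ computes $\Deltaa([\vec s])$ from $\Deltaa([1_k])$. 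Hence at most one operator can satisfy (1)--(3).

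For existence I would simply define $\Deltaa([\vec s]):=\frac{1}{(s_1-1)!\cdots(s_k-1)!}\,P_1^{s_1-1}\cdots P_k^{s_k-1}\,\Deltaa([1_k])$, with $\Deltaa([1_k])$ as in (2). Because (\ref{eq:Fraction}) singles out one ordering of the $p_i$, this is unambiguous; the real content is that the resulting map satisfies condition (3) for \emph{every} $i$. The key lemma is that the $P_i$ pairwise commute on $\calhd\otimes\calhd$. Granting $[P_i,P_j]=0$, verifying (3) reduces to $P_i\,P_1^{s_1-1}\cdots P_k^{s_k-1}=P_1^{s_1-1}\cdots P_i^{s_i}\cdots P_k^{s_k-1}$, which is precisely $\Deltaa(p_i[\vec s])=P_i\,\Deltaa([\vec s])$. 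Proving the commutation is the one delicate point of this step: it rests on the fact that the $p_j$ preserve depth, so that the shift $\dep(\vec s)$ built into $\cks$ is left unchanged when a second $p$-operator is applied, which lets one reduce the identity to the evident commutativity of the $p_i$ on $\calhd$; the boundary indices $i=\dep(\vec s)$ and $i=\dep(\vec s)+1$, where the two summands of $P_i$ meet, must be inspected by hand.

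For coassociativity I would compare $L:=(\Deltaa\otimes\id)\Deltaa$ and $R:=(\id\otimes\Deltaa)\Deltaa$ as maps $\calhd\to\calhd\otimes\calhd\otimes\calhd$. On $[1_k]$ both equal the iterated deconcatenation $\sum_{a+b+c=k}[1_a]\otimes[1_b]\otimes[1_c]$, which is coassociative. Applying condition (3) twice and tracking the $\cks$-shifts shows that $L$ and $R$ intertwine each $\delta_i$ with one and the same triple-tensor operator; the generation-from-$[1_k]$ argument of the uniqueness step, now applied to maps into $\calhd^{\otimes 3}$, then forces $L=R$.

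The main obstacle is the last clause, that $\Deltaa$ is a homomorphism for $\shap$. The cleanest route is again uniqueness: regard $\Phi:=\Deltaa\circ m_\shap$ and $\Psi:=(m_\shap\otimes m_\shap)\circ(\id\otimes\tau\otimes\id)\circ(\Deltaa\otimes\Deltaa)$ as two maps $\calhd\otimes\calhd\to\calhd\otimes\calhd$. They agree on $[1_a]\otimes[1_b]$, since there $\shap$ and $\Deltaa$ restrict to the classical binomial (divided-power) Hopf algebra on the single letter $x_1$, and one then shows that $\Phi$ and $\Psi$ intertwine the $p_i$ with a common operator so that agreement propagates off the generators. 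I expect the genuine difficulty to be concentrated exactly here: the product $\shap$ is governed by the Rota--Baxter recursion for $I$ recalled above, while $\Deltaa$ is governed by the a priori unrelated family $\delta_i$, and the heart of the proof is to reconcile the two --- to verify that the operator identity encoding ``$\shap$ respects the $\Deltaa$-coaction'' is compatible with the Rota--Baxter relation. This compatibility, rather than any of the earlier steps, is where I anticipate the bulk of the work.
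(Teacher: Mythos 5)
First, a point of order: this paper does not prove Proposition \ref{pp:unique} at all --- it is quoted from \cite{GHXZ1} --- so there is no in-paper proof to compare you against, and your argument must be judged on its own terms. Your uniqueness argument is correct and complete in outline: (1)--(2) fix $\Deltaa$ on $\mathbf{1}$ and the $[1_k]$, condition (3) yields intertwining relations for the $p_i$, and Eq.~\eqref{eq:Fraction} generates every basis element from $[1_k]$. (One glitch you inherit from the paper: the displayed definition $p_i=\delta_i-\delta_{i+1}$ contradicts the property $p_i[s_1,\cdots,s_k]=s_i[s_1,\cdots,s_i+1,\cdots,s_k]$ on which Eq.~\eqref{eq:Fraction} rests; the consistent choice is $p_i=\delta_i-\delta_{i-1}$, so your $P_i$ should be $D_i-D_{i-1}$.) Your key lemma $[P_i,P_j]=0$ is true, and your reason for it --- the $p_j$ preserve depth, so the shift built into $\cks$ is unchanged and everything reduces to $[p_i,p_j]=0$ on $\calhd$ --- is exactly right. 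However, your reduction of condition (3) to the monomial identity $P_iP_1^{s_1-1}\cdots P_k^{s_k-1}=P_1^{s_1-1}\cdots P_i^{s_i}\cdots P_k^{s_k-1}$ only makes sense for $1\le i\le k=\dep(\vec s)$. At the boundary index, where $p_{k+1}$ acts as $-\delta_k$ on depth-$k$ elements, commutativity alone does not suffice: one must additionally verify $\bigl(\sum_{j=1}^{k+1}P_j\bigr)\Deltaa([1_k])=0$, i.e.\ check (3) by hand on the generator $[1_k]$ at that index (it holds because $\delta_{k+1}$ and its shifted versions annihilate every term $[1_j]\otimes[1_{k-j}]$). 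You flag boundary inspection, but only for the commutation lemma, and you never carry it out.

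The genuine gap is the final clause, that $\Deltaa$ is a homomorphism for $\shap$, which your proposal locates but does not prove. Your plan --- compare $\Phi:=\Deltaa\circ m_\shap$ with $\Psi:=(m_\shap\otimes m_\shap)\circ(\id\otimes\tau\otimes\id)\circ(\Deltaa\otimes\Deltaa)$ on the generators $[1_a]\otimes[1_b]$ (where your binomial/Vandermonde observation is correct) and then ``propagate via the $p_i$'' --- cannot even be set in motion as stated, because propagation requires expressing $\Phi\circ(p_i\otimes\id)$ as some fixed operator composed with $\Phi$, i.e.\ it requires a Leibniz-type identity writing $p_i([\vec u])\shap[\vec v]$, equivalently $\delta_i([\vec u]\shap[\vec v])$, in terms of operators applied to $[\vec u]\shap[\vec v]$. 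No such identity is stated, let alone proved, in your proposal; yet this compatibility between the family $\{\delta_i\}$ and the shuffle (Rota--Baxter) product is precisely the substance of the construction in \cite{GHXZ1}. Saying that ``the heart of the proof is to reconcile the two'' identifies the obstacle accurately, but naming it is not surmounting it. The coassociativity step has the same structural shape but is genuinely easier, since the needed triple-tensor intertwiners do follow formally from (3) together with the fact that $\Deltaa$ preserves total depth; still, they should be exhibited and the $\cks$-shift bookkeeping checked. In sum: uniqueness is proved, existence is nearly proved, coassociativity is credibly sketched, and the multiplicativity --- the deepest part of the statement --- is missing.
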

To illustrate the new coproduct, we give some examples.
\begin{align*}
	\Deltaa([1,2])&={\bf 1}\otimes [1,2]+[1]\otimes [2]-[2]\otimes [1]+[1,2]\otimes {\bf 1}, \\
	\Deltaa([2,2])&={\bf 1}\otimes [2, 2]+[2]\otimes [2]-2[3]\otimes [1]+[2,2]\otimes {\bf 1}, \\
	\Deltaa([1,2,1])&={\bf 1}\otimes [1,2,1]+[1]\otimes [2,1]-[2]\otimes [1,1]+[1,2]\otimes [1]+[1,2,1]\otimes {\bf 1}.
\end{align*}

The space $\calhd$ has a grading by weight:
\begin{equation}
	\calhd=\bigoplus_{k=0}^{\infty}\Q H_k, \mlabel{eq:calhdgrading}
\end{equation}
where
$$H_0:={\bf 1},
\quad H_k:=\Big\{[s_1,\cdots,s_m]\in\Z_{\ge 1}^{m}\,\Big|\,s_1+\cdots+s_m=k\Big\}.
$$
Together with the unit $u_{\ge 1}: \Q\to\calhd , 1\mapsto {\bf 1}$, counit
$\varepsilon_{\geq 1}: \calhd\longrightarrow \Q, {\bf 1}\mapsto 1, [\vec s]\to 0$
and the weight grading, we have the following result.

\begin{prop} \mcite{GHXZ1}
	$(\calhd, \shapb, u_{\ge 1}, \Deltaa, \varepsilon_{\geq 1})$ is a connected graded Hopf algebra.
\end{prop}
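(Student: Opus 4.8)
The plan is to exploit the structure already in place. The discussion immediately preceding the statement shows that $(\calhd, \shap, u_{\ge 1}, \Deltaa, \varepsilon_{\geq 1})$ is a \emph{connected} graded bialgebra, where connectedness means that the degree-zero component $\Q H_0 = \Q{\bf 1}$ is one-dimensional and is identified with the ground field via the unit $u_{\ge 1}$. It is a general principle that every connected graded bialgebra is automatically a Hopf algebra, the antipode being produced by a recursion on the grading. So the only remaining task is to construct a two-sided convolution inverse $S$ of the identity in $\mathrm{End}_\Q(\calhd)$, where the convolution of $f, g$ is $f \star g := \shap \circ (f \otimes g) \circ \Deltaa$ with unit $u_{\ge 1}\circ\varepsilon_{\geq 1}$.

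First I would set up the recursion. Because $\Deltaa$ is graded and the bialgebra is connected, for a homogeneous element $x \in \Q H_n$ with $n \ge 1$ the coproduct splits as
$$\Deltaa(x) = x \otimes {\bf 1} + {\bf 1} \otimes x + \widetilde{\Delta}(x), \qquad \widetilde{\Delta}(x) = \sum_{(x)} x_{(1)} \otimes x_{(2)},$$
where every $x_{(1)}, x_{(2)}$ is homogeneous of degree strictly between $0$ and $n$. I then set $S({\bf 1}) = {\bf 1}$ and define, inductively on $n$,
$$S(x) = -x - \sum_{(x)} S(x_{(1)}) \shap x_{(2)}.$$
Since each $x_{(1)}$ has degree strictly less than $n$, the right-hand side involves only values of $S$ that are already defined, so $S$ is well-defined and linear, and by the splitting above it is graded of degree $0$.

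Finally I would verify the antipode axioms. By construction, for $n \ge 1$ the map $(S \star \id)(x) = \shap(S \otimes \id)\Deltaa(x)$ collapses to $S(x) + x + \sum_{(x)} S(x_{(1)}) \shap x_{(2)} = 0 = (u_{\ge 1}\circ\varepsilon_{\geq 1})(x)$, while $(S\star\id)({\bf 1}) = {\bf 1}$; thus $S$ is a left convolution inverse of $\id$. The one point that requires genuine care is that $S$ is also a right inverse. I would handle it by running the symmetric recursion $S'(x) = -x - \sum_{(x)} x_{(1)} \shap S'(x_{(2)})$ to produce a right convolution inverse $S'$, and then invoking the standard monoid identity $S = S \star (\id \star S') = (S \star \id) \star S' = S'$, where associativity of $\star$ follows from associativity of $\shap$ together with coassociativity of $\Deltaa$. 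This forces $S = S'$, hence $\id \star S = S \star \id = u_{\ge 1}\circ\varepsilon_{\geq 1}$, so $S$ is an antipode and $(\calhd, \shap, u_{\ge 1}, \Deltaa, \varepsilon_{\geq 1})$ is a graded Hopf algebra.
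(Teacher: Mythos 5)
Your proposal is correct and follows essentially the same route as the paper: the paper establishes that $(\calhd, \shap, u_{\ge 1}, \Deltaa, \varepsilon_{\geq 1})$ is a connected graded bialgebra (using the weight grading, the counit, and Proposition~\ref{pp:unique}) and then concludes it is a Hopf algebra by the standard fact that every connected graded bialgebra admits an antipode. You have merely made that standard fact explicit, with a correct degree-induction construction of $S$ and a correct two-sided-inverse argument via the convolution monoid.
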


We elaborate on the difference between this Hopf algebra and the existing shuffle Hopf algebras with deconcatenation coproducts on $\calhd$ or on $\Q\langle x_0,x_1\rangle$.

\begin{remark}
\mlabel{r:coprodnote}
\begin{enumerate}
\item First the coproduct $\Deltaa$ is not the coproduct $\Delta_{\rm dec}$ that deconcatenates the vectors $[s_1,\cdots,s_k]$ of the underlying vector space
$$ \calhd= \Q{\bf1} \bigoplus_{k\in \Z_{>0}, \vec s\in \Z_{\ge 1}^k}\Q[\vec s].
$$
For example,
\begin{equation}
 	\Deltaa([1,2])={\bf 1}\otimes [1,2]+[1]\otimes [2]-[2]\otimes [1]+[1,2]\otimes {\bf 1},
\mlabel{e:shufexam}
\end{equation}
which has the term $-[2]\otimes [1]$ not appearing in the deconcatenation coproduct.
\item
Next the coproduct $\Deltaa$ is not the one that comes from deconcatenating the words in $\Q\langle x_0,x_1\rangle x_1$ and then being transported by the linear isomorphism
$$
\rho: \calhd \to \Q\oplus\Q \langle x_0, x_1 \rangle x_1, \ {\bf1}\mapsto {\bf1}, [s_1, \cdots, s_k]\to x_0^{s_1-1}x_1\cdots x_0^{s_k-1}x_1
$$
in Eq.\,\meqref{eq:rho}. In fact, the subspace
$\Q \langle x_0, x_1 \rangle x_1$ of $\Q\langle x_0,x_1\rangle$ is not closed under the word deconcatenation coproduct.
For example, word deconcatenation gives
$$ \Delta_{\rm dec}(x_1x_0x_1)= {\bf 1} \ot x_1x_0x_1+x_1\ot x_0x_1+x_1x_0\ot x_1+x_1x_0x_1\ot {\bf 1},$$
where $x_1x_0$ is not in $\Q \langle x_0, x_1 \rangle x_1$.
In our case, $x_1x_0x_1$ corresponds to $[1,2]$. The new coproduct $\Deltaa$ on $\calhd$ transports to a coproduct $\tilde{\Delta}_{\ge 1}$ on $\Q \langle x_0, x_1 \rangle x_1$ for which the coproduct in Eq.\,\meqref{e:shufexam} corresponds to
$$ \tilde{\Delta}_{\ge 1} (x_0x_1x_0)= {\bf 1}\ot x_0x_1x_0 + x_1\ot x_0x_1 - x_0x_1\ot x_1 + x_1x_0x_1\ot {\bf 1}.$$
\item
Since our Hopf algebra $(\calhd,\shapb,\Deltaa)$ is not the Hopf algebra $(\calhd,\sha,\Delta_{\rm dec})$, the Hopf algebra isomorphism obtain in this paper (Theorem\,\mref{t:shachi}) is different from the Hoffman-Newman-Radford isomorphism in Eq.\,\meqref{e:hoffexp}\,\mcite{H2,NR}. The relation between these two Hopf algebra isomorphims will be given in Theorem\,\mref{t:hoffiso}.
\end{enumerate}
\end{remark}

\subsection{Hopf algebras of quasi-symmetric functions, of MZV quasi-shuffles, and of MZV shuffles}
The notion of quasi-symmetric functions was formally introduced by Gessel~\mcite{Ge} in 1984 with its motivation traced back to the work of Stanley~\mcite{St1972} on $P$-partitions and of MacMahon~\mcite{Mac} on plane partitions.

A natural linear basis of the space $\QSym$ of quasi-symmetric functions is the monomial quasi-symmetric functions
\begin{equation} \mlabel{eq:mqsym}
M_\alpha:=\sum_{0<i_1<i_2<\cdots<i_k}x_{i_1}^{\alpha_1}x_{i_2}^{\alpha_2}\cdots x_{i_k}^{\alpha_k}
\end{equation}
parameterized by compositions $\alpha=(\alpha_1,\cdots,\alpha_k)$  of \(n\) (denoted \(\alpha \comp n\)).

The space $\QSym$ has a natural graded Hopf algebra structure.
Here the grading is given by
$$ \QSym=\bigoplus_{n\ge 0} \QSym_n,$$
where $\QSym_n$ is the linear space spanned by all monomial quasi-symmetric functions $M_\alpha$ with $\alpha \comp n$.
The product in $\QSym$ is the usual multiplication of formal power series. With respect to the linear basis of monomial quasi-symmetric functions, there is the relation
\[
M_\alpha  M_\beta = M_{\alpha \ast \beta}
\]
where $\alpha \ast \beta$ is the quasi-shuffle product of the compositions $\alpha $ and $\beta$ defined by the same recursion as in Example~\mref{ex:quasishuf}.

The coproduct \(\Delta_{\rm dec}: \QSym \to \QSym \otimes \QSym\) is given by deconcatenation of compositions:
\[
\Delta_{\rm dec}(M_{(a_1, \dots, a_k)}) = \sum_{i=0}^{k} M_{(a_1, \dots, a_i)} \otimes M_{(a_{i+1}, \dots, a_k)}.
\]
with the counit \(\epsilon: \QSym \to \mathbb{Q}\) being the projection onto the constant term: \(\epsilon(M_\alpha) = 1\) if \(\alpha=\emptyset\), and \(0\) otherwise.

Therefore, the resulting Hopf algebra $(\QSym,\cdot,\Delta_{\rm dec})$ is isomorphic to the quasi-shuffle Hopf algebra $(\calhd,\ast,\Delta_{\rm dec})$~\mcite{H2}:
$$\Phi: \QSym \to \calhd, M_\alpha \mapsto [\vec \alpha],$$
and the isomorphism preserves grading.

A remarkable property of quasi-symmetric functions is that their Hopf algebra structure is the terminal object in the category of combinatorial Hopf algebras~\mcite{ABS06}.
Recall \mcite{ABS06} that a {\bf combinatorial Hopf algebra} over $\Q$ is a pair $(H, \chi)$, where $H$ is a graded connected Hopf algebra over $\Q$ and $\chi: H\rightarrow \Q$ is an algebra homomorphism, called a character.
For the Hopf algebra $\QSym$, the map
\begin{equation}
	\mlabel{e:charqs}
\begin{split}
\chi_Q :& \QSym \longrightarrow \Q, \\
\chi_{Q} (M_\emptyset)&=1, \quad \chi_Q(M_{(n)})=1, n\geq 1, \\ \chi_Q(M_\alpha)&=0, \text{ otherwise}.
\end{split}
\end{equation}
defines a character.

\begin{theorem}\cite[Theorem 4.1]{ABS06}
The combinatorial Hopf algebra $(\QSym, \chi_Q)$ is the terminal object in the category of combinatorial Hopf algebras.
\mlabel{prop:Qsymisterminalobject}
\end{theorem}

Due to the graded Hopf algebra isomorphism $\Phi$ in Eq.\,\meqref{e:charqs}, the universal property of the Hopf algebra $\QSym$ can be reformulated in terms of the Hopf algebra $(\calhd,\ast,\Delta_{\rm dec})$ as follows.

\begin{coro}
\mlabel{coro:IndHom}
With the character
\begin{equation}
	\mlabel{eq:zetaQdefinitionQSym}
	\begin{split}
		\chi_Q'=\chi_Q\circ \Phi^{-1}:& \calhd\to \Q, \\
		\chi_Q'({\bf1})=1,& \ \chi_Q'([s_1])=1, \ \chi_Q'([s_1, \cdots, s_k])=0, k>1.
	\end{split}
\end{equation}
the combinatorial Hopf algebra $(\calhd,\ast,\Delta_{\rm dec},\chi_Q')$ is a terminal object in the category of combinatorial Hopf algebras. More precisely, for any combinatorial Hopf algebra $(H,\cdot,\Delta,\chi)$, there is a unique Hopf algebra homomorphism
\begin{equation}
\mlabel{e:psiform}
\Psi _\chi :(H,\cdot,\Delta)\to (\calhd, \ast, \Delta _{\rm dec})
\end{equation}
such that $\chi'_Q \circ \Psi_\chi=\chi$.
\end{coro}

\begin{example}\mlabel{exm:Hoffiso}
 The Hopf algebra isomorphism
$$\exp: (\calhd,\shap,\Delta_{\rm dec})\to (\calhd,\ast,\Delta_{\rm dec})
$$
in Eq.\,\meqref{e:hoffexp} can be viewed as a special case of the universal property of  $(\calhd,\ast,\Delta_{\rm dec})$ corresponding to the character
\begin{equation}
	\mlabel{e:charh}
\chi_H: (\calhd, \shap) \to \Q, \quad {\bf1}\mapsto 1; \  [s_1, \cdots, s_k]\mapsto \frac 1{k!}.
\end{equation}
\end{example}

\section{Isomorphisms between Hopf algebras for MZVs}
\mlabel{s:hopfiso}
This section studies properties of the Hopf algebra homomorphisms $\Psi_\chi: (\calhd, \shapb, \Delta_{\ge 1})\to (\calhd, \ast, \Delta_{\rm dec})$ obtained in Corollary \mref{coro:IndHom} and determines the condition for the homomorphisms to be isomorphisms (Theorem\,\mref{t:chiisom}). For this purpose, we first introduce an order on the basis of the homogeneous components of $\calhd$, so that the matrix of $\Psi_\chi$ with respect to this basis is upper triangular (Proposition\,\mref{prop:Psiisomorphism}). Since the construction of $\Psi_\chi$ in \meqref{eq:chialpha} involves iterated coproducts, we also need an order or relation on the tensor powers of these homogeneous components. These will be provided in Section~\mref{ss:order}.

\subsection{An order for tensors}
\mlabel{ss:order}

Consider the grading
$$\calhd =\bigoplus _{n\ge 0} \Q H_n
$$
given in Eq.\,\meqref{eq:calhdgrading}. We define a relation $\mls$ on $H_n$ by restricting the lexicographic order to vectors of the same weight: for $[s_1,\cdots, s_m]$, $[t_1, \cdots, t_\ell ]\in H_n$, define
\begin{equation}
[s_1, \cdots, s_m]\mls[t_1, \cdots, t_\ell]
\mlabel{e:order1}
\end{equation}
if there is $1\le j\le \min\{m, \ell\}$ such that $s_1=t_1, \cdots, s_{j-1}=t_{j-1}$, $s_j>t_j$. As usual, define $[\vec s]\mle [\vec t]$ if $[\vec s]\mls [\vec t]$ or $[\vec s]=[\vec t]$.
This order is obtained from transporting the lexicographic order in $\Q\langle x_0,x_1\rangle x_1$ with $x_0<x_1$ which is often used in the MZV literature\,(see e.g. \cite{BJOP,GPZ,H3},\cite[\S\,3.3.1]{Zh}).

\begin{example}
For the basis elements of $H_4$, we have $$[4]\mls[3,1]\mls[2,2]\mls[2,1,1]\mls[1,3]\mls[1,2,1]\mls[1,1,2]\mls[1,1,1,1].$$
\end{example}

Here are some simple properties of $\mls$.

\begin{lemma}
\mlabel{lem:extension}
\begin{enumerate}
\item
The relation $\mls$ is a well order.
\mlabel{i:ext1}
\item
The maximal element in $H_n$ is $[1_{n}]$, the minimal element in $H_n$ is $[n]$;
\mlabel{i:ext2}
\item
If $[\vec u]\mls[\vec v]$, then
$$[\vec u, \vec w]\mls[\vec v, \vec w], \ [\vec w, \vec u]\mls[\vec w, \vec v].
$$
\mlabel{i:ext3}
\end{enumerate}
\end{lemma}
\begin{proof}
\meqref{i:ext1} It follows from the fact that if $s_i=t_i$ for all $1\le i\le \min\{m,\ell\}$, then $m=\ell$ and $\vec s=\vec t$.

\meqref{i:ext2} and
\meqref{i:ext3} follow directly from the definition.
\end{proof}

We next extend the well order $\mle$ on the homogeneous pieces of $\calhd$ to a transitive relation $\nmle$ on tensor powers of $\calhd$.

\begin{defn}\mlabel{def:orderten}
\begin{enumerate}
\item A pure tensor $[\vec u_1]\otimes \cdots \otimes [\vec u_k]\in (\calhd)^{\otimes k}$ is called a {\bf homogeneous pure tensor} if $\vec u_i\in \Z _{\ge 1}^{s_i}$, $i=1,\cdots, k$. Thus an element of $(\calhd)^{\otimes k}$ is a finite sum of homogeneous pure tensors.
\item The {\bf weight} of a homogeneous pure tensor $[\vec u_1]\otimes \cdots \otimes [\vec u_k]$ is defined to be the weight of the vector $[\vec u_1,\cdots, \vec u_k]$, namely the sum of all the components of the vector.
\item
For homogeneous pure tensors $[\vec u_1]\otimes \cdots \otimes [\vec u_k]\in (\calhd)^{\otimes k}$ and $[\vec v_1]\otimes \cdots \otimes [\vec v_\ell]\in (\calhd)^{\otimes \ell}$
{\it of the same weight}, define
\begin{equation}\mlabel{e:orderh}
[\vec u_1]\otimes \cdots \otimes [\vec u_k]\nmle  [\vec v_1]\otimes \cdots \otimes [\vec v_\ell]
\end{equation}
if
	$$[\vec u_1,\cdots, \vec u_k]\mle [\vec v_1,\cdots,\vec v_\ell].
	$$
Only homogeneous pure tensors with the same weight can be compared under $\nmle$.
\item
More generally, define a relation $\nmle$ on $\bigoplus_{k\geq 0} (\calhd) ^{\otimes k}$  by
\begin{equation}
\mlabel{e:order2}
\sum _ia_i [\vec u_{i,1}]\otimes \cdots \otimes [\vec u_{i,k_i}]\nmle \sum _j b_j[\vec v_{j,1}]\otimes \cdots \otimes [\vec v_{j,\ell_j}]
\end{equation}
if for each $i,j$ with $a_i\neq 0$ and $b_j\neq 0$, the corresponding homogeneous pure tensors have the same weight and
	$$[\vec u_{i,1}]\otimes \cdots \otimes [\vec u_{i,k_i}]\nmle [\vec v_{j,1}]\otimes \cdots \otimes [\vec v_{j,\ell_j}].
	$$
\end{enumerate}
\end{defn}

\begin{example}
	\begin{enumerate}
		\item ${\bf1}\otimes [1]\nmle [1]$.
		\item $[2]\otimes [2]\nmle [1,3]$;
		\item $[2]\otimes [2]\nmle [2,2]$ and $[2,2]\nmleq [2]\otimes [2]$;
		\item $[4]-[3,1]\nmle [2,2]$.
	\end{enumerate}
\end{example}

\begin{lemma}
	\mlabel{lem:pkeepsorder} The operators $p_i$ preserves the order $\nmle$ in the sense that, if $[\vec t]\nmle [\vec s]$, then for $0<i\le k=\dep([\vec s])$, either $p_i ([\vec t])\not =0$ or
	$$p_i([\vec t])\nmle p_i([\vec s]).
	$$
\end{lemma}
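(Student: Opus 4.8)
The plan is to reduce the claimed inequality to a comparison of two single basis words and then run a short case analysis on the position where $[\vec t]$ and $[\vec s]$ first differ. If $[\vec t]=[\vec s]$ there is nothing to prove, so assume $[\vec t]<[\vec s]$; by definition of the order the two words then lie in the same graded piece $H_n$, and there is a position $J$ with $1\le J\le\min\{\dep(\vec t),\dep(\vec s)\}$ such that $t_m=s_m$ for $m<J$ and $s_J<t_J$. Writing $\ell=\dep(\vec t)$, I first dispose of the degenerate case $i>\ell$: there $\delta_i[\vec t]=\delta_{i+1}[\vec t]=0$, so $p_i[\vec t]=0$, and since the defining condition of the pre-order is vacuous when the left-hand side has no terms, $0\le p_i[\vec s]$ holds automatically.

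It remains to treat $1\le i\le\min\{\ell,\dep(\vec s)\}$, where the formula $p_i[s_1,\dots,s_k]=s_i[s_1,\dots,s_i+1,\dots,s_k]$ applies to both words, giving $p_i[\vec s]=s_i[\vec s']$ and $p_i[\vec t]=t_i[\vec t']$ with $[\vec s'],[\vec t']$ obtained by adding $1$ to the $i$-th entry. Since the pre-order on $\bigoplus_{k\ge 0}\calh^{\otimes k}_{\Z_{\ge 1}}$ depends only on the basis words occurring and not on their (here positive) coefficients, proving $p_i[\vec t]\le p_i[\vec s]$ amounts to proving $[\vec t']\le[\vec s']$. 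These words have weight $n+1$ and the same depths as $[\vec s],[\vec t]$, so they are comparable, and the heart of the argument is to show that their first difference is still at position $J$. I would split into three cases. If $i<J$, the incremented entry sits among the positions $<J$ where $\vec s$ and $\vec t$ already agree, and raising it by $1$ in both keeps those positions equal while leaving position $J$ untouched. If $i>J$, position $J$ is untouched outright. If $i=J$, the $J$-th entries become $s_J+1$ and $t_J+1$, and $s_J<t_J$ forces $s_J+1<t_J+1$ because these are integers. In every case $[\vec s']$ and $[\vec t']$ agree in all positions $<J$ and satisfy $s'_J<t'_J$, whence $[\vec t']<[\vec s']$, the desired inequality.

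I expect no serious obstacle, as the content is essentially bookkeeping. The two points needing a moment's care are the degenerate case $i>\dep(\vec t)$, where one must invoke the convention that $0$ precedes every element under the pre-order, and the case $i=J$, where order-preservation relies on the elementary fact that adding $1$ to both sides of a strict inequality between integers preserves it.
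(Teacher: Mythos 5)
Your handling of the range $1\le i\le\min\{\dep(\vec t),\dep(\vec s)\}$ is correct, and in fact more detailed than the paper's own Case 3 (the subcase analysis at the first position of disagreement $J$ is exactly the right bookkeeping). The genuine gap is in your ``degenerate case'' $i>\dep(\vec t)$. You dismiss it by asserting $\delta_i([\vec t])=\delta_{i+1}([\vec t])=0$, hence $p_i([\vec t])=0$; but that reading of $p_i$ is incompatible with the formula $p_i([s_1,\dots,s_k])=s_i[s_1,\dots,s_i+1,\dots,s_k]$ that your main case relies on: if $p_i=\delta_i-\delta_{i+1}$ were meant literally, then for $i<k$ one would get $p_i([\vec s])=-s_{i+1}[s_1,\dots,s_{i+1}+1,\dots,s_k]$, not $s_i[\dots]$. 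The displayed definition of $p_i$ in the paper contains an index typo; the convention actually in force (the one under which Eq.~\eqref{eq:Fraction} and the rest of the paper operate, and the one your main case uses) gives, with $\ell=\dep(\vec t)$,
$$p_i([\vec t])=t_i[t_1,\dots,t_i+1,\dots,t_\ell]\ (i\le\ell),\qquad p_{\ell+1}([\vec t])=-\delta_\ell([\vec t])\neq 0,\qquad p_i([\vec t])=0\ (i>\ell+1).$$
So when $\dep(\vec t)<\dep(\vec s)$ (which does occur under the hypothesis, e.g.\ $[4]\le[3,1]$) and $i=\dep(\vec t)+1\le\dep(\vec s)$, the element $p_i([\vec t])$ is a nonzero sum of $\ell$ words, and your argument says nothing about it. This is precisely the nontrivial Case 2 of the paper's proof, which you have skipped.

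To close the hole: for $i=\ell+1\le k$, equal weights together with $\ell<k$ force $[\vec t]\neq[\vec s]$, so there is a first disagreement at some position $i_0\le\ell$ with $s_{i_0}<t_{i_0}$. Each word $[t_1,\dots,t_j+1,\dots,t_\ell]$ occurring in $p_{\ell+1}([\vec t])=-\sum_{j=1}^{\ell}t_j[t_1,\dots,t_j+1,\dots,t_\ell]$ must then be compared with the single word $[s_1,\dots,s_\ell,s_{\ell+1}+1,\dots,s_k]$ of $p_{\ell+1}([\vec s])$: their first disagreement is at position $\min(j,i_0)$, and in each of the subcases $j<i_0$, $j=i_0$, $j>i_0$ the $t$-side entry there is strictly larger, hence the $t$-word is strictly smaller in the order (coefficients and signs being irrelevant to the pre-order). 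Adding this case makes your proof complete and essentially identical to the paper's.
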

\begin{proof} Let $\vec t=(t_1, \cdots, t_m)$ and $\vec s=(s_1, \cdots, s_k)$. Since $[\vec t]\nmle [\vec s]$, by definition,
	either
	$$k=m, [t_1, \cdots, t_{m}]= [s_1, \cdots, s_k],
	$$
	or
	there is $1\le i_0\le m$, such that
	$$t_1=s_1, \cdots, t_{i_0-1}=s_{i_0-1}, t_{i_0}>s_{i_0}.$$
	
	We verify the conclusion by separately considering three cases.
	\begin{itemize}
		\item [{\bf Case 1:}] Let $0<m+1<i$. Then by Eq.\,\meqref{eq:ptos3},
		$$p_i([\vec t])=0,$$
as desired.
		\item [{\bf Case 2:}] Let $i=m+1$. In this case $[\vec t]\not =[\vec s]$.
		So
		there is $1\le i_0\le m$, such that
		$$t_1=s_1, \cdots, t_{i_0-1}=s_{i_0-1}, t_{i_0}>s_{i_0}.$$
		Then by Eqs.\,\meqref{eq:ptos1} and \meqref{eq:ptos2}, we obtain
		\begin{equation*}
			p_i([\vec t])=-\sum _jt_j[t_1, \cdots, t_j+1, \cdots, t_{m}]
		\end{equation*}
		and
		$$p_i([\vec s])=s_{m+1}[s_1, \cdots, s_{m}, s_{m+1}+1, \cdots, s_k].
		$$
		Therefore,
		$$[t_1, \cdots, t_j+1, \cdots, t_{m}]\mls [s_1, \cdots, s_{m}, s_{m+1}+1, \cdots, s_k]$$
		for each $1\le j\le m$,
		and  so $$p_i([\vec t])\nmle p_i([\vec s]).
		$$
		\item [{\bf Case 3:}] Let $0<i\le m$. Then Eq.\,\meqref{eq:ptos1} gives
		$$p_i([\vec t])=t_i[t_1, \cdots, t_i+1, \cdots, t_{m}]
		$$
		and
		$$p_i[\vec s]=s_i[s_1, \cdots, s_i+1, \cdots, s_{k}].
		$$
		Thus
		$$\hspace{6.4cm} p_i([\vec t])\nmle p_i([\vec s]).
		\hspace{6.4cm} \qedhere $$
	\end{itemize}
\end{proof}

Lemma~\mref{lem:extension} gives
\begin{lemma}
\begin{enumerate}
\item We have $[\vec u_1, \ldots, \vec u_k]\nmle [\vec u_1]\ot \cdots \ot [\vec u_k] \nmle [\vec u_1,\ldots, u_k]$;
\mlabel{i:ten1}
\item The relation $\nmle$ in Eq.~\meqref{e:order2} is transitive.
\mlabel{i:ten3}
\item If $[\vec u_i]\nmle [\vec v_i]$\, for $i=1,\ldots,k$, then
$[\vec u_1] \ot \cdots \ot [\vec u_k]\nmle [\vec v_1]\ot \cdots \ot [\vec v_k].$
\mlabel{i:ten2}
\end{enumerate}
	\mlabel{lem:exttensor}
\end{lemma}
\begin{proof}
\meqref{i:ten1} Follows directly from Definition \mref{def:orderten}.
%, we know
%$$[\vec u_1, \ldots, \vec u_k]\nmle [\vec u_1]\ot \cdots \ot [\vec u_k] \nmle [\vec u_1,\ldots, \vec u_k]$$
%since $[\vec u_1, \ldots, \vec u_k]\mle [\vec u_1, \ldots ,\vec u_k] \mle [\vec u_1,\ldots, \vec u_k]$.

\meqref{i:ten3}. The transitivity of $\nmle $ in the general case in Eq.\,\meqref{e:order2} follows from the case for homogeneous pure tensor in Eq.\,\meqref{e:orderh} which boils down to the transitivity of the order $\mle$ which holds by Lemma~\mref{lem:extension}.\meqref{i:ext1}.

\meqref{i:ten2}. In the case of $k=2$, by Lemma \mref{lem:extension}.\meqref{i:ext3} we obtain
$$ [\vec u_2]\ot [\vec u_2]\nmle [\vec v_1]\ot [\vec u_2] \nmle [\vec v_1]\ot [\vec v_2].$$
The general case follows from an induction.

\end{proof}

We also have
\begin{lemma}\mlabel{lem:deltakeeporder}
Suppose $[\vec u]\otimes [\vec v]\nmle [\vec s]$ with $w([\vec u])+w([\vec v])=w([\vec s])$. Then for $0<i\le k=\dep([\vec s])$, we have
$$\text{either } (\id \cks p_i)([\vec u]\otimes [\vec v])=0 \text{ or } (\id \cks p_i)([\vec u]\otimes [\vec v])\nmle p_i([\vec s]),
$$
and
$$
\text{either } (p_i\otimes \id)([\vec u]\otimes [\vec v]) =0 \text{ or } (p_i\otimes \id)([\vec u]\otimes [\vec v])\nmle p_i([\vec s]).
$$

\end{lemma}

\begin{proof} Let $\vec u=(u_1, \cdots, u_m)$, $\vec v=(v_1, \cdots, v_n)$, $\vec s=(s_1, \cdots, s_k)$, %\li{$t$ is $s$?}
and
$$[\vec t]=[t_1, \cdots, t_{m+n}]:=[\vec u, \vec v]=[u_1, \cdots, u_m, v_1,\cdots, v_n].$$
Then $[\vec t]\nmle [\vec s]$. So by Lemma \mref {lem:pkeepsorder}, we have: either $p_i([\vec u, \vec v ])=0$ or
$$p_i([\vec u, \vec v ])\nmle p_i([\vec s]). $$
We apply it to prove the two claims of the lemma by dividing into five cases.
\begin{itemize}
  \item [{\bf Case 1:}]
 Let $0< i\le m$. Then
$$(\id \cks p_i)([\vec u]\otimes [\vec v])=[\vec u]\otimes p_{i-m}([\vec v])=0,
$$
and
$$(p_i\otimes \id)([\vec u]\otimes [\vec v])=p_i([\vec u])\otimes [\vec v]\nmle [p_i([\vec u]), \vec v]=p_i([\vec u, \vec v])\nmle p_i([\vec s]).
$$
These give the desired inequalities thanks to the transitivity of $\nmle$ in Lemma~\mref{lem:exttensor}.(\mref{i:ten3}), which will be used repeatedly later without specifying.

\item [{\bf Case 2:}] Let $i=m+1$. Then
 \begin{equation*}
 \begin{split}
 (\id \cks p_{m+1})([\vec u]\otimes [\vec v])=&[\vec u]\otimes p_{1}([\vec v])=[\vec u]\otimes v_1 [v_1+1, v_2,\cdots, v_n]\\
 \nmle& [\vec u, v_1+1, v_2,\cdots, v_n]=[t_1,\cdots,t_m, t_{m+1}+1,t_{m+2}\cdots, t_{m+n}]\\
=&p_{m+1}([\vec u, \vec v])\nmle p_i([\vec s]).
 \end{split}
 \end{equation*}
 \begin{equation*}
 \begin{split}
 (p_{m+1}\otimes \id)([\vec u]\otimes [\vec v])=&-\delta_m([u_1,\cdots, u_m])\otimes [\vec v]\\
     =&-\sum_{i=1}^m u_i[u_1,\cdots,u_{i-1}, u_i+1,u_{i+1},\cdots,u_m]\otimes [\vec v]\\
     \nmle &\sum_{i=1}^m t_i[t_1,\cdots,t_{i-1}, t_i+1,t_{i+1}\cdots, t_{m+1},\cdots, t_{m+n}].
 \end{split}
 \end{equation*}
For each $1\le i\le m$, there is
 $$[t_1,\cdots,t_{i-1}, t_i+1,t_{i+1},\cdots, t_{m+1},\cdots, t_{m+n}]\mls[t_1,\cdots,t_m, t_{m+1}+1,t_{m+2},\cdots, t_{m+n}]=p_{m+1}([\vec u,\vec v]).
 $$
So
 $$(p_{m+1}\otimes \id)([\vec u]\otimes [\vec v])\nmle p_{m+1}([\vec u,\vec v])\nmle p_i([\vec s]).
 $$
 \item [{\bf Case 3:}] Let $m+1<i\le m+n$. Then
\begin{equation*}
\begin{split}
(\id \cks p_i)([\vec u]\otimes [\vec v])=&[\vec u]\otimes p_{i-m}([\vec v])\nmle [\vec u,  p_{i-m}([\vec v])]=p_i([\vec u, \vec v])\nmle p_i([\vec s]),
\end{split}
\end{equation*}
$$(p_i\otimes \id)([\vec u]\otimes [\vec v])=p_i([\vec u])\otimes [\vec v]=0.
$$
\item [{\bf Case 4:}] Let $i=m+n+1$. Then
\begin{equation*}
\begin{split}
(\id \cks p_{m+n+1})([\vec u]\otimes [\vec v])=&[\vec u]\otimes \Big(-\delta_n([\vec v])\Big)\nmle [\vec u, \delta_n([\vec v])]\\
=&\sum_{j=m+1}^{m+n}t_j[t_1,\cdots,t_m,\cdots,t_{j-1},t_j+1,t_{j+1},\cdots,t_{m+n}]\\
\nmle&[s_1,\cdots,s_{m+n},s_{m+n+1}+1,\cdots,s_k]=p_{m+n+1}([\vec s]),
\end{split}
\end{equation*}
$$(p_i\otimes \id)([\vec u]\otimes [\vec v])=0.
$$
 \item [{\bf Case 5:}] If $i>m+n+1$, then
 $$(\id \cks p_i)([\vec u]\otimes [\vec v])=[\vec u]\otimes p_{i-m}([\vec v])=0.
$$
$$(p_i\otimes \id)([\vec u]\otimes [\vec v])=p_i([\vec u])\otimes [\vec v]=0.
$$
\end{itemize}
This completes the proof.
\end{proof}

\subsection{Hopf algebra isomorphisms} The coproduct $\Delta _{\ge 1}$ on $\calhd$ has a special property as follows, thanks to the order and relation introduced in the previous subsection.
Recall that the coproduct $\Deltaa$ is homogeneous in the sense that,
for $[\vec s]\in H_n$, there is
\begin{equation}
\mlabel{e:coprodexp}
\Delta _{\ge 1}([\vec s])=\sum _{\vec u, \vec v}a_{\vec u, \vec v}^{\vec s}[\vec u]\otimes [\vec v]
\end{equation}
where $\{[\vec u]\otimes [\vec v]\}_{\vec u, \vec v}$ are the standard basis of $\bigoplus _{p+q=n}\Q H_p\otimes \Q H_q$.
Then the pure homogeneous tensors $[\vec s]$ and $[\vec u]\ot [\vec v]$ have the same weight as defined in Definition\,\mref{def:orderten}.

\begin{prop}
\mlabel{pp:Order} For $[\vec s]\in H_n$, there is
$$\Deltaa([\vec s]) \nmle [\vec s].$$
More precisely, if $a_{\vec u, \vec v}^{\vec s}\not =0$, then
$[\vec u]\otimes [\vec v]\nmle [\vec s].$
\end{prop}

\begin{proof}
	For $[\vec s]=[1_k]$, we have
	$$\Deltaa([1_k])=\sum\limits_{j=0}^{k}[1_j]\otimes [1_{k-j}],$$
	and by definition,
	$$[1_j]\otimes [1_{k-j}]\nmle [1_k].
	$$
Thus we have the inequality.
	
In general, by Eq.\,\meqref {eq:Fraction}, we have
	$$[s_1, \cdots, s_i, \cdots, s_k]=\frac {p_1^{s_1-1}\cdots p_k^{s_k-1}}{(s_1-1)!\cdots (s_k-1)!}[1_k].
	$$
So
\begin{align*}
&\Delta _{\ge 1}[s_1, \cdots, s_i, \cdots, s_k]=\Delta _{\ge 1}\Big(\frac {p_1^{s_1-1}\cdots p_k^{s_k-1}}{(s_1-1)!\cdots (s_k-1)!}[1_k]\Big)
\\
=&\frac {(\id\cks p_1+p_1\otimes \id)^{s_1-1}\cdots (\id\cks p_k+p_k\otimes \id)^{s_k-1}}{(s_1-1)!\cdots (s_k-1)!}(\Delta _{\ge 1}[1_k])
\\
=&\sum _{j=1}^k \frac {(\id\cks p_1+p_1\otimes \id)^{s_1-1}\cdots (\id\cks p_k+p_k\otimes \id)^{s_k-1}}{(s_1-1)!\cdots (s_k-1)!}([1_j]\otimes [1_{k-j}]).
\end{align*}

For fixed $1\le j\le k$, there is
	$$[1_j]\otimes [1_{k-j}]\nmle [1_k].
	$$
Applying Lemma \mref {lem:deltakeeporder} repeatedly, we obtain
	$$\Big((\id\cks p_1+p_1\otimes \id)^{s_1-1}\cdots (\id\cks p_k+p_k\otimes \id)^{s_k-1}\Big)([1_j]\otimes [1_{k-j}])\nmle (p_1^{s_1-1}\cdots p_k^{s_k-1})([1_k])
	$$
	$$=(s_1-1)!\cdots (s_k-1)![s_1, \cdots, s_k].
	$$
	This give the desired inequality.
\end{proof}

For a combinatorial Hopf algebra
$(\calhd, \shapb, \Delta _{\ge 1}, \chi)$, we apply the general construction in~\mcite{ABS06} and display the following explicit description of $\Psi_\chi$ from Eq.\,\meqref{e:psiform} for later applications.
\begin{equation}
	\mlabel {eq:Psi}
	\Psi_\chi([\vec s])=\sum_{\alpha\models n} \chi_\alpha([\vec s]) [\vec \alpha],
\end{equation}
where $\alpha=(\alpha_1, \cdots, \alpha_m)$, $[\vec \alpha]=[\alpha _1, \cdots, \alpha _m]$, and $\chi_{\alpha}$ is the composition
\begin{equation}
	\chi_\alpha: \calhd\xrightarrow{\Delta _{\ge 1}^{(m - 1)}}(\calhd)^{\otimes m}\xrightarrow{\pi_\alpha} \Q H_{\alpha_1}\otimes\cdots\otimes \Q H_{\alpha_m}\xrightarrow{\chi^{\otimes m}}\mathbb{Q}.
	\mlabel{eq:chialpha}
\end{equation}
where
$$
\Deltaa^{(0)} := \text{id}_{H}, \quad
\Deltaa^{(1)} := \Deltaa,
$$
$$
\Deltaa^{(m)} := (\Deltaa \otimes \text{id}^{\otimes (m-1)}) \circ \Deltaa^{(m-1)}=(\Deltaa^{(m-1)}\otimes \id)\circ \Deltaa,
$$
and $\pi_\alpha:=\ot_i \pi_{\alpha_i}$ is the tensor product of the canonical projections $\pi_{\alpha_i}: \calhd\to \Q H_{\alpha_i}$ onto the homogeneous components.

Since $\alpha_1+\cdots+\alpha_k=n$, we have $\Psi_\chi: \Q H_n\longrightarrow \Q H_n$. So $\Psi _\chi$ is homogeneous.

\begin{prop} \mlabel{prop:Psiisomorphism} For a combinatorial Hopf algebra
	$(\calhd, \shapb, \Delta _{\ge 1}, \chi)$ and $n\geq 0$, the restriction of the induced combinatorial Hopf algebra homomorphism
	$\Psi _\chi |_{\Q H_n} $ is upper triangular with respect to the standard basis $H_n$ according to the order in Eq\,\meqref{e:order1}.
\end{prop}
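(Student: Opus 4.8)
The plan is to read off the matrix of $\Psi_\chi|_{\Q H_n}$ from the expansion \eqref{eq:Psi} and show that its support lies on or below the diagonal once $H_n$ is listed from smallest to largest. Concretely, the whole statement reduces to the claim that $\chi_\alpha([\vec s])\neq 0$ forces $[\vec\alpha]\le[\vec s]$: indeed, \eqref{eq:Psi} then expresses $\Psi_\chi([\vec s])$ as a combination of basis vectors $[\vec\alpha]$ with $[\vec\alpha]\le[\vec s]$ only, which is exactly the upper-triangularity asserted. So I would prove this single implication.

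First I would upgrade Proposition \ref{pp:Order} from $\Delta_{\ge 1}$ to the iterated coproduct, proving by induction on $m$ (uniformly over all weights) that every tensor $[\vec u_1]\otimes\cdots\otimes[\vec u_m]$ occurring with nonzero coefficient in $\Delta_{\ge 1}^{(m-1)}([\vec s])$ satisfies $[\vec u_1,\dots,\vec u_m]\le[\vec s]$. The cases $m=1$ (identity) and $m=2$ (Proposition \ref{pp:Order}) are immediate. For the inductive step I write $\Delta_{\ge 1}^{(m-1)}=(\Delta_{\ge 1}^{(m-2)}\otimes\id)\circ\Delta_{\ge 1}$: applying $\Delta_{\ge 1}$ first yields terms $[\vec a]\otimes[\vec b]$ with $[\vec a,\vec b]\le[\vec s]$ by Proposition \ref{pp:Order}, and applying the induction hypothesis to $[\vec a]$ splits it as $[\vec u_1]\otimes\cdots\otimes[\vec u_{m-1}]$ with $[\vec u_1,\dots,\vec u_{m-1}]\le[\vec a]$. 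Lemma \ref{lem:exttensor} then appends $[\vec b]$ on the right to give $[\vec u_1,\dots,\vec u_{m-1},\vec b]\le[\vec a,\vec b]\le[\vec s]$, which is the required bound for the $m$-fold tensor.

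Next I would establish an elementary ``merging'' inequality: if each $\vec u_i$ is a composition of weight $\alpha_i$ for $i=1,\dots,m$, then $[\alpha_1,\dots,\alpha_m]\le[\vec u_1,\dots,\vec u_m]$. This follows directly from the definition of the order by scanning the two sequences from the left. As long as the initial blocks $\vec u_1,\dots,\vec u_{i-1}$ are single parts, the two sequences agree on their leading entries $\alpha_1,\dots,\alpha_{i-1}$; at the first block $\vec u_i$ that has more than one part, its leading entry is strictly less than $\alpha_i=w([\vec u_i])$, so at this first point of difference the left sequence carries the larger value and is therefore the smaller element. If every block is a single part the two sequences coincide. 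In all cases $[\vec\alpha]\le[\vec u_1,\dots,\vec u_m]$.

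Finally I assemble the pieces. If $\chi_\alpha([\vec s])\neq 0$, then by \eqref{eq:chialpha} the projected tensor $\pi_\alpha\Delta_{\ge 1}^{(m-1)}([\vec s])$ is nonzero, so some $[\vec u_1]\otimes\cdots\otimes[\vec u_m]$ with $w([\vec u_i])=\alpha_i$ for all $i$ actually occurs in $\Delta_{\ge 1}^{(m-1)}([\vec s])$. The first step gives $[\vec u_1,\dots,\vec u_m]\le[\vec s]$, the second step gives $[\vec\alpha]=[\alpha_1,\dots,\alpha_m]\le[\vec u_1,\dots,\vec u_m]$, and transitivity of the order yields $[\vec\alpha]\le[\vec s]$, as needed. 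The only delicate point I anticipate is the induction in the first step, namely correctly transporting the order through the appended tensor leg via Lemma \ref{lem:exttensor}; the merging inequality is a routine left-to-right comparison, and the final assembly is bookkeeping.
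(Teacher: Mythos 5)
Your proposal is correct and takes essentially the same route as the paper's proof: both reduce the statement to showing that $\chi_\alpha([\vec s])\neq 0$ forces $[\vec \alpha]\le [\vec s]$, both extend Proposition~\ref{pp:Order} to the iterated coproduct via Lemma~\ref{lem:exttensor}, and both conclude by comparing $[\vec\alpha]$ with the concatenation $[\vec u_1,\dots,\vec u_m]$ of the weight-$\alpha_i$ blocks surviving the projection $\pi_\alpha$. The only difference is one of detail, not method: you write out the induction on $m$ and prove the merging inequality $[\vec\alpha]\le[\vec u_1,\dots,\vec u_m]$ by a direct left-to-right scan, steps the paper compresses into citations and an ``obviously''.
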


\begin{proof}
For a given $[\vec s]\in H_n$, we only need to prove that, if $\vec \alpha\not=\vec s$ and $\chi _\alpha ([\vec s])\not=0$ in Eq. (\mref{eq:Psi}), then $[\vec \alpha]\mls[ \vec s]$ .

By Proposition \mref {pp:Order} and Lemma \mref{lem:exttensor}, when we write
$$\Delta _{\ge 1}^{(m-1)}([\vec s])=\sum a_{\vec u_1, \cdots, \vec u_m} [\vec u_1]\otimes \cdots \otimes [\vec u _m],
$$
here $[\vec u_i]$ can be ${\bf 1}=[\emptyset]$, then  $a_{\vec u_1, \cdots, \vec u_m}\not =0$ implies
\begin{equation}
\mlabel{e:ums}
[\vec u_1]\otimes \cdots \otimes [\vec u _m]\nmle [\vec s].
\end{equation}
	
The projection of $\Delta _{\ge 1}^{(m-1)}([\vec s])\subseteq (\calhd)^{\ot m}$ to $\Q H_{\alpha_1}\otimes\cdots\otimes \Q H_{\alpha_m}$ via $\pi_\alpha$ annihilates all $m$-tensors $[\vec u_1]\otimes \cdots \otimes [\vec u _m]$ except those
	with weights
	$$w([\vec u_1])=\alpha _1, \cdots, w([\vec u_m])=\alpha _m.
	$$
When these inequalities hold, by Lemma~\mref{lem:extension}.\meqref{i:ext2}, we have
	$$[\alpha _1]\nmle [\vec u_1], \cdots, [\alpha _m]\nmle [\vec u_m].
	$$
Thus
\begin{align*}
[\vec \alpha ]&\nmle [\alpha_1]\ot \cdots \ot [\alpha_m] \quad \text{(by Lemma~\mref{lem:exttensor}.\meqref{i:ten1})} \\
&  \nmleq [\vec u_1] \ot \cdots \ot [\vec u_m]
\quad \text{(by Lemma~\mref{lem:exttensor}.\meqref{i:ten2})}\\
&\nmleq [\vec s]\quad \text{(by Eq.\meqref{e:ums})}.
\end{align*}
So $[\vec \alpha]\nmle [\vec s]$ and hence $[\vec \alpha]\mle [\vec s]$. By Lemma~\mref{lem:extension}.\meqref{i:ext1}, $\mle $ is a well order on $H_n$. So from $[\vec \alpha] \mle [\vec s]$ and $[\vec \alpha] \ne [\vec s]$, we obtain $[\vec \alpha]\mls[\vec s]$, as needed.
\end{proof}

\begin{coro}
For the combinatorial Hopf algebra $(\calhd, \shapb, \Delta _{\ge 1}, \chi)$ in Proposition~\mref{prop:Psiisomorphism} and scalar $s\in \Z_{\ge 1}$, we have
	$$\Psi _\chi ([s])=\chi ([s])[s].
	$$
\end{coro}
\begin{proof}
This follows from Eq.\,\meqref{eq:Psi} and the fact that $[s]$ is the smallest among its compositions thanks to Lemma~\mref{lem:extension}.\meqref{i:ext2}. 	
\end{proof}
	
\begin{theorem}
	\mlabel{t:chiisom} The combinatorial Hopf algebra homomorphism $\Psi _\chi$ is an isomorphism  if and only if $\chi ([s])\not =0$ for every $s\in \Z_{\ge 1}$.
	\end{theorem}
\begin{proof}
	We already know that $\Psi _\chi |_{\Q H_n}$ is upper triangular. Now let us find the coefficient $\chi_{[\vec s]}$ of $[\vec s]$ in $\Psi_\chi ([\vec s])$.
	
	Let $[\vec s]=[s_1, \cdots, s_k]$. If $k=1$, then by definition,
	$$\Delta _{\ge 1}^{(0)}([s_1])=[s_1],
	$$
	$$\chi _{[s_1]}([s_1])=\chi ([s_1]).
	$$
	If $k>1$,
	then
	$$\Delta^{(k-1)} _{\ge 1}([\vec s])=(\Delta^{(k-2)} _{\ge 1}\otimes \id)(\Delta _{\ge 1}([\vec s]))
	$$
	and
	$$\Delta _{\ge 1}([\vec s])=\sum _{j=0}^k \frac {(\id\cks p_1+p_1\otimes \id)^{s_1-1}\cdots (\id\cks p_k+p_k\otimes \id)^{s_k-1}}{(s_1-1)!\cdots (s_k-1)!}([1_j]\otimes [1_{k-j}]).
	$$
In this sum, consider a term corresponding to $j=0,\cdots, k-2$. Then $\dep([1_{k-j}])\ge 2$. Note that $p_i$ does not change the depth. So the second tensor factor of the term has depth greater or equal to 2. Then the first tensor factor has depth less or equal to $k-2$. This means that when $\Delta^{(k-2)} _{\ge 1}$ is applied to this term, at least one of the resulting $k-1$ tensor factors has to be $\bf 1$. This also means that, in their contributions to
$\Delta^{(k-1)} _{\ge 1}([\vec s])$, the $k$-tensors
	$$[\vec u_1]\otimes \cdots \otimes [\vec u_k]
	$$
has at least one tensor factor $[\vec u_i]=[\emptyset]={\bf 1}$.
Hence it is annihilated by the projection
	$$\pi _{[\vec s]}: (\calhd) ^{\otimes k}\to \Q H_{s_1}\otimes\cdots\otimes \Q H_{s_k}$$
and hence by $\chi_{[\vec s]}$ in Eq.~\meqref{eq:chialpha}.	
The same holds for the terms corresponding to $j=k$.
Thus the only contribution to $\chi_{[\vec s]}$ comes from the term in the sum with $j=k-1$.
	
In summary, for
	$j=0, \cdots, k-2$ or $j=k$, the contributions in $\Delta^{(k-1)} _{\ge 1}([\vec s])$ from $[1_j]\otimes [1_{k-j}]$ are in $\ker \pi _{[\vec s]}$.
So
\begin{align*}
&\Delta^{(k-1)} _{\ge 1}([\vec s])\\
&\equiv (\Delta^{(k-2)} _{\ge 1}\otimes \id)\Big(\frac {(\id\cks p_1+p_1\otimes \id)^{s_1-1}\cdots (\id\cks p_k+p_k\otimes \id)^{s_k-1}}{(s_1-1)!\cdots (s_k-1)!}\Big)([1_{k-1}]\otimes [1])\bmod \ker \pi _{[\vec s]}.
\end{align*}
Since the last component of $\pi _{[\vec s]}$ is the projection to $\Q H_{s_k}$, only $\id \cks p_k$ in the above expression can increase the last component in $[1_{k-1}]\otimes [1]$ to $w \otimes [s_k]$ for some multi-tensor $w$. Thus the previous expression equals to
\begin{align*}
&(\Delta^{(k-2)} _{\ge 1}\otimes \id)\Big(\frac {(\id\cks p_1+p_1\otimes \id)^{s_1-1}\cdots (\id\cks p_{k-1}+p_{k-1}\otimes \id)^{s_{k-1}-1}}{(s_1-1)!\cdots (s_{k-1}-1)!}\Big)([1_{k-1}]\otimes [s_k]))
\bmod \ker \pi _{[\vec s]}.
\end{align*}
Continuing this computation, for $i=1, \cdots, k-1$, there is
	$$(\id \cks p_i)([1_{k-1}]\otimes [s_k] )=0,$$
and	so
\begin{equation*}
\begin{split}
	\Delta^{(k-1)} _{\ge 1}([\vec s])
\equiv &(\Delta^{(k-2)} _{\ge 1}\otimes \id)\Big(\frac {(p_1\otimes \id)^{s_1-1}\cdots (p_{k-1}\otimes \id)^{s_{k-1}-1}}{(s_1-1)!\cdots (s_{k-1}-1)!}\Big)([1_{k-1}]\otimes [s_k])) \bmod \ker \pi _{[\vec s]}\\
\equiv&(\Delta^{(k-2)} _{\ge 1}\otimes \id)([s_1,\cdots, s_{k-1}]\otimes [s_k]) \bmod \ker \pi _{[\vec s]}\\
\equiv &\Delta^{(k-2)} _{\ge 1}([s_1,\cdots, s_{k-1}])\otimes [s_k] \bmod \ker \pi _{[\vec s]}.
\end{split}
\end{equation*}
Iterating this process, we eventually obtain
$$\Delta^{(k-1)} _{\ge 1}([\vec s])
		\equiv [s_1]\otimes\cdots \otimes [s_k] \bmod \ker \pi _{[\vec s]}.
$$	
Therefore
	$$\chi_{[\vec s]} ([\vec s])=\chi([s_1])\cdots\chi([s_{k}]).$$
Thus, $\Psi_\chi |_{\Q H_n}$ is upper triangular with diagonal element $\chi([s_1])\cdots\chi([s_{k}])$ at the spot $[s_1, \cdots, s_k]$.
This shows that $\Psi _\chi|_{\Q H_n}$ is invertible if and only $\chi ([s])\not =0$  for every $s\in \Z_{\ge 1}$.
\end{proof}

Now let
$$\mathrm{Hom}\big((\calhd, \shapb, \Delta _{\ge 1}), (\calhd, \ast, \Delta _{\rm  dec})\big)$$
denote the set of graded Hopf algebra homomorphisms of degree $0$ between the graded connected Hopf algebras $(\calhd, \shapb, \Delta _{\ge 1})$ and $(\calhd, \ast, \Delta _{\rm  dec})$, and let
$$\Isom\big((\calhd, \shapb, \Delta _{\ge 1}),(\calhd\ast, \Delta _{\rm  dec})\big)$$
denote set of graded Hopf algebra isomorphisms of degree 0.

\begin{theorem} \mlabel{t:psiisom}
The map
$$\Psi: \Big\{\chi: (\calhd, \shapb) \to \Q\, \Big|\ \chi  \text{ is a character}\Big\}\to \mathrm{Hom}\big((\calhd, \shapb, \Delta _{\ge 1}), (\calhd, \ast, \Delta _{\rm  dec})\big),
$$
$$\chi \mapsto \Psi _\chi,
$$
is bijective.
Furthermore, the bijection restricts to a bijection
$$\Psi: \{\chi: (\calhd, \shapb) \to \Q |\ \chi ([s])\not =0, \forall s \in \Z _{\ge 1}\}\to \Isom\big((\calhd, \shapb, \Delta _{\ge 1}), (\calhd,\ast, \Delta _{\rm  dec})\big).
$$
\end{theorem}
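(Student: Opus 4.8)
The plan is to derive both assertions from the universal (terminal) property of $\QSym$ recorded in Proposition~\ref{prop:Qsymisterminalobject}, combined with the invertibility criterion of Proposition~\ref{pp:chiisom}. Throughout I work with the character $\zeta'_Q$ on $(\calhd, \ast, \Delta_{\rm dec})$ from \eqref{eq:zetaQdefinitionQSym}, which corresponds to $\zeta_Q$ on $\QSym$ under the isomorphism of the preceding Proposition.

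I would first dispatch well-definedness and injectivity. Each $\Psi_\chi$ is a Hopf algebra homomorphism by Corollary~\ref{coro:IndHom}, and it is homogeneous of degree $0$ by the observation following \eqref{eq:chialpha}; hence $\Psi_\chi \in \mathrm{Hom}$ and $\Psi$ is well-defined. For injectivity, the point is that $\chi$ is recoverable from $\Psi_\chi$: Corollary~\ref{coro:IndHom} gives $\zeta'_Q \circ \Psi_\chi = \chi$, so $\Psi_\chi = \Psi_{\chi'}$ immediately yields $\chi = \zeta'_Q \circ \Psi_\chi = \zeta'_Q \circ \Psi_{\chi'} = \chi'$.

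The essential step is surjectivity. Given an arbitrary degree-$0$ graded Hopf algebra homomorphism $\Theta \in \mathrm{Hom}$, I would manufacture a character from it by setting $\chi := \zeta'_Q \circ \Theta$. Since $\Theta$ is an algebra homomorphism $(\calhd, \shap) \to (\calhd, \ast)$ and $\zeta'_Q$ is an algebra homomorphism $(\calhd, \ast) \to \Q$, the composite $\chi$ is a character on $(\calhd, \shap)$. With this choice $\Theta$ becomes a morphism of combinatorial Hopf algebras $(\calhd, \shap, \Delta_{\ge 1}, \chi) \to (\calhd, \ast, \Delta_{\rm dec}, \zeta'_Q)$, precisely because $\zeta'_Q \circ \Theta = \chi$ by construction. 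The induced map $\Psi_\chi$ is another such morphism, and the terminal object property asserts there is only one. Hence $\Theta = \Psi_\chi$. I expect this to be the main obstacle, not for any technical depth but because it hinges on the right conceptual move: every homomorphism in $\mathrm{Hom}$ silently carries its own character via postcomposition with $\zeta'_Q$, and once this character is named the uniqueness clause of the universal property applies verbatim.

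The second statement then follows formally from the first together with Proposition~\ref{pp:chiisom}, which says $\Psi_\chi$ is an isomorphism if and only if $\chi([s]) \neq 0$ for every $s \in \Z_{\ge 1}$. In one direction, if $\chi([s]) \neq 0$ for all $s$ then $\Psi_\chi \in \Isom$, so $\Psi$ carries the set of non-vanishing characters into $\Isom$. In the other direction, any $\Theta \in \Isom$ lies in $\mathrm{Hom}$, whence $\Theta = \Psi_\chi$ for a unique $\chi$ by the first part; since $\Psi_\chi = \Theta$ is an isomorphism, Proposition~\ref{pp:chiisom} forces $\chi([s]) \neq 0$ for all $s$. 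Thus the bijection already established restricts to the claimed bijection between non-vanishing characters and isomorphisms.
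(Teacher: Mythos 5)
Your proposal is correct and follows essentially the same route as the paper's proof: injectivity via recovering $\chi = \zeta'_Q \circ \Psi_\chi$, surjectivity by assigning to any $\Theta \in \mathrm{Hom}$ the character $\zeta'_Q \circ \Theta$ and invoking the uniqueness clause of Corollary~\ref{coro:IndHom} (equivalently, the terminal-object property), and the restriction statement from Proposition~\ref{pp:chiisom}. Your write-up is in fact slightly more explicit than the paper's, spelling out well-definedness and both directions of the restriction argument, but the underlying ideas coincide.
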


\begin{proof}
By Corollary\,\mref{coro:IndHom}, the map $\Psi$ is an injective map. On the other hand, for a Hopf algebra homomorphism $F:(\calhd, \shapb, \Delta _{\ge 1})\to  (\calhd,\ast, \Delta _{\rm  dec})$, the composition $\chi'_Q\circ F$ is a character on $(\calhd, \shapb, \Delta _{\ge 1})$. Since $\chi'_Q\circ \Psi_\chi=\chi$ also holds, the uniqueness in Corollary\,\mref{coro:IndHom} gives $F=\Psi_\chi$, proving the surjectivity of $\Psi$.

The second conclusion then follows from  Theorem\,\mref{t:chiisom} by restrictions.
\end{proof}

\subsection{An explicit isomorphism between the MZV Hopf algebras}
We end the paper by showing that there is an isomorphism from $(\calhd, \shapb, \Delta _{\ge 1})$ to $(\calhd, \ast, \Delta _{\rm dec})$. By Theorem~\mref{t:psiisom}, we just need to provide a character
$$\chi: \calhd \to \Q
$$
with $\chi ([n])\not =0$ for all $n\in \Z _{\ge 1}$.

\begin{theorem}
\begin{enumerate}
\item
The linear map
\begin{equation}
\mlabel{e:char0}
\begin{split}
\mchi: &(\calhd, \shapb)\longrightarrow \Q, \\
[s_1,\cdots, s_k]&\mapsto \frac{1}{(s_1+\cdots+s_k)!}, \quad \mchi({\bf1})=1
\end{split}
\end{equation}
is a character.
\mlabel{i:chachi1}
\item
Correspondingly, the homomorphism $\Psi_{\mchi}$ is a Hopf algebra isomorphism from
$(\calhd, \shapb, \Delta _{\ge 1})$ to $(\calhd, \ast, \Delta _{\rm dec})$.
\mlabel{i:chachi2}
\end{enumerate}
\mlabel{t:shachi}
\end{theorem}

\begin{proof}
\meqref{i:chachi1}
Let $[\vec s]$ and $[\vec t]$ be two basis elements of $\calhd$, with weight $m$ and $n$ respectively.
The shuffle product $[\vec s]\shapb[\vec t]$ is the sum of shuffles of $[\vec s]$ and $[\vec t]$, with monic coefficients. Since each shuffle is a permutation of $[\vec s]$ and $[\vec t]$, the weight of a shuffle is the sum $m+n$. Furthermore, there are $\binom{m+n}{m}$ shuffles. Thus
$$ \mchi ([\vec s]\shapb [\vec t]) = \binom{m+n}{m} \frac{1}{(m+n)!} = \frac{1}{m!} \frac{1}{n!}= \mchi([\vec s])\mchi([\vec t]),$$
as needed.

Then \meqref{i:chachi2} follows from Theorem\,\mref{t:psiisom}.
\end{proof}

\begin{example}
	\mlabel{ex:psiexam}
We give some values of the Hopf algebra isomorphism $\Psi_0:=\Psi_{\mchi}$. Compare with the Hoffman-Newman-Radford isomorphism $\exp$ in Eq.\,\meqref{e:hoffexp}.
\begin{align*}
	\Psi_0([n]) &= \mchi([n])[n] =\frac{1}{n!} [n], n\geq 1,\\
	\Psi_0([1,1]) &= \mchi([1,1])[2] + \mchi^2([1])[1,1]
	=\frac{1}{2}[2]+[1,1], \\
	\Psi_0([1,2]) &= \mchi([1,2])[3] + \mchi([1])\mchi([2])[1,2] - \mchi([2])\mchi([1])[2,1]
	=\frac{1}{6}[3]+\frac{1}{2}[1,2]-\frac{1}{2}[2,1],\\
	\Psi_0([2,2]) &= \mchi([2,2])[4] + \mchi^2([2])[2,2] - 2\mchi([3])\mchi([1])[3,1]=\frac{1}{4!}[4]+\frac{1}{4}[2,2]-\frac{1}{3}[3,1].
\end{align*}
\end{example}

Then composing $\Psi_0$ with $\log$ in Eq.\,\meqref{e:hofflog} gives the following conclusion.
\begin{theorem}
\mlabel{t:hoffiso}
\begin{enumerate}
\item There is a Hopf algebra isomorphism
\begin{equation}
\mlabel{e:hoffiso}
	\log \circ \Psi_0: (\calhd,\shapb,\Delta_{\ge 1}) \to (\calhd,\shap,\Delta_{\rm dec}).
\end{equation}
\mlabel{i:hoffiso1}
\item
We have the following commutative diagram.
\begin{equation}
\mlabel{d:isorel}
\begin{split}
\xymatrix{
	(\calhd, \shapb, \Delta_{\ge 1}) \ar[ddr]_{\log\circ \Psi_{0}} \ar[dr]^{\chi_{_0}} \ar[rr]^{\Psi_{0}}&&(\calhd, \ast, \Delta_{\text{dec}})\ar[dl]_{\chi_Q'}                  \\
	& \Q &
	\\
	& (\calhd, \shap, \Delta_{\text{dec}})\ar[u]^{\chi_H}\ar[uur]_{\exp} &}
\end{split}
\end{equation}
Here the characters $\chi_Q', \chi_H$ and $\chi_{{_0}}$ are given in Eq.\,\meqref{eq:zetaQdefinitionQSym}, \meqref{e:charh} and \meqref{e:char0} respectively.
\mlabel{i:hoffiso2}
\end{enumerate}
\end{theorem}
\begin{proof}
\meqref{i:hoffiso2} To verify the commutativity of each triangle, by Theorem\,\mref{t:shachi}.\meqref{i:chachi2}, there is $\chi_{_0}=\chi_Q'\circ \Psi_0$.  By Example \mref{exm:Hoffiso}, we have $\chi_H=\exp\circ \chi_Q'$, that is, $\chi_H\circ \log=\chi_Q'$. Thus by Item\,\meqref{i:hoffiso1}, we have
$\chi_H\circ\log \circ \Psi_0=\chi_{_0}.$
\end{proof}

We end this study with some values of the Hopf algebra isomorphism $\log \circ \Psi_0$ and leave more detailed analysis to a future work.
\begin{example}
Continuing with Example\,\mref{ex:psiexam}, we have
{\small
\begin{align*}
\log \circ \Psi_0([n])&=\log\bigg(\frac{1}{n!}[n]\bigg)=\frac{1}{n!}[n],\\
\log \circ \Psi_0([1, 1])&=\log\bigg(\frac{1}{2}[2]+[1,1]\bigg)=\frac{1}{2}[2]+[1,1]-\frac{1}{2}[2]=[1,1],\\
\delete{
\log \circ \Psi_0([2, 1])&=\log(\frac{1}{6}[3]+\frac{1}{2}[2,1])=\frac{1}{6}[3]+\frac{1}{2}([2,1]-\frac{1}{2}[3])=\frac{1}{2}[2,1]-\frac{1}{12}[3],\\
}
\log \circ \Psi_0([1, 2])&=\log\bigg(\frac{1}{6}[3]+\frac{1}{2}[1,2]-\frac{1}{2}[2,1]\bigg)\\
&=\frac{1}{6}[3]+\frac{1}{2}([1,2]-\frac{1}{2}[3])-\frac{1}{2}([2,1]-\frac{1}{2}[3])\\
&=\frac{1}{6}[3]+\frac{1}{2}[1,2]-\frac{1}{2}[2,1],\\
\delete{
\log \circ \Psi_0([1, 1, 1])&=\log( \frac{1}{6}[3]+\frac{1}{2}[1,2]+\frac{1}{2}[2,1]+[1,1,1])\\
&=\frac{1}{6}[3]+\frac{1}{2}([1,2]-\frac{1}{2}[3])+\frac{1}{2}([2,1]-\frac{1}{2}[3])+\frac{1}{6}[3]-\frac{1}{2}[2,1]-\frac{1}{2}[1,2]+[1,1,1]\\
&=-\frac{1}{6}[3]+[1,1,1],\\
\log \circ \Psi_0([3, 1])&=\log(\frac{1}{24}[4]+\frac{1}{6}[3,1])\\
&=\frac{1}{24}[4]+\frac{1}{6}([3,1]-\frac{1}{2}[4])\\
&=\frac{1}{6}[3,1]-\frac{1}{24}[4],\\
}
\log \circ \Psi_0([2,2])&=\log\bigg(\frac{1}{24}[4]+\frac{1}{4}[2,2]-\frac{1}{3}[3,1]\bigg)\\
&=\frac{1}{24}[4]+\frac{1}{4}([2,2]-\frac{1}{2}[4])-\frac{1}{3}([3,1]-\frac{1}{2}[4])\\
&=\frac{1}{12}[4]+\frac{1}{4}[2,2]-\frac{1}{3}[3,1].
\end{align*}
}
\end{example}

\noindent
{\bf Acknowledgments.} This research is supported by
the NSFC (12471062, 12261131498).

\noindent
{\bf Declaration of interests. } The authors have no conflict of interest to declare.

\noindent
{\bf Data availability. } Data sharing is not applicable
as no data were created or analyzed.

\vspace{-.2cm}

\end{document}